\newtheorem{theorem}{Theorem}
\newtheorem{prop}[theorem]{Proposition}
\newtheorem{lem}[theorem]{Lemma}
\theoremstyle{definition}
\newtheorem{rem}[theorem]{Remark}
\newtheorem{mydef}[theorem]{Definition}
\newtheorem{example}[theorem]{Example}
\renewcommand{\epsilon}{\varepsilon}
\def\<{\langle}
\def\>{\rangle}
\newcommand{\BigWedge}{\mathord{\adjustbox{valign=B,totalheight=.6\baselineskip}{$\bigwedge$}}}
\begin{document}
\title{2-step nilpotent $L_\infty$-algebras and Hypergraphs}
\author{Marco Aldi and Samuel Bevins}

\begin{abstract}
We describe a procedure to attach a nilpotent strong homotopy Lie algebra to every simple hypergraph and prove that two hypergraphs are isomorphic if and only if the corresponding strong homotopy Lie algebras are isomorphic. As an application, we characterize hypergraphs admitting a system of distinct representatives in terms of symplectic forms on the corresponding strong homotopy Lie algebra. We conclude with a combinatorial description of the cohomology of these strong homotopy Lie algebras in low degree.
\end{abstract}
\maketitle

\section{Introduction}

Motivated by the study of existence of Anosov diffeomorphisms on nilmanifolds, Dani and Mainkar \cite{DaniMainkar05} introduced a construction that to each simple graph $G$ associates a 2-step nilpotent Lie algebra $\mathcal L(G)$. An appealing feature of the resulting 2-step nilpotent Lie algebras is that much of their structure (and that of their associated Lie groups and nilmanifolds) can be described in purely graph-theoretical terms. For this reason, the Dani-Mainkar construction has been exploited to exhibit interesting examples of symplectic nilmanifolds \cite{PouseeleTirao09}, of Einstein solvmanifolds \cite{LauretWill11}, and of geodesic flows on nilmanifolds\cite{Ovando20}. 

In a different direction, Mainkar \cite{Mainkar15} has shown that two simple graphs $G_1$ and $G_2$ are isomorphic if and only if $\mathcal L(G_1)$ and $\mathcal L(G_2)$ are isomorphic as Lie algebras. This result suggests the possibility to translate graph theoretic notions into the language of Lie algebras. Conversely, looking at $\mathcal L(G)$ through the lenses of Lie theory can shed new light on the underlying graph $G$. For instance, it is natural to look at the Cartan-Chevalley-Eilenberg cohomology of $\mathcal L(G)$ as an invariant of $G$. While the general structure of the cohomology of $\mathcal L(G)$ is not well understood, it is known \cite{PouseeleTirao09} that $H^2(\mathcal L(G))$ detects triangles in $G$. 

In this paper we extend the Dani-Mainkar construction to simple hypergraphs $G$ in such a way that the corresponding Lie-theoretic object $\mathcal L(G)$ is a 2-step nilpotent $L_\infty$-algebra. Our first main result is that Mainkar's theorem still holds: given two simple hypergraphs $G_1$ and $G_2$, $G_1$ is isomorphic to $G_2$ if and only if $\mathcal L(G_1)$ is isomorphic to $\mathcal L(G_2)$ as $L_\infty$-algebras. As a result, simple hypergraphs can be thought of as particular types of $L_\infty$-algebras.

Our second main result is about the existence of (graded) symplectic forms on $L_\infty$-algebras. We show that $\mathcal L(G)$ admits a symplectic structure if and only if $G$ has a system of distinct representatives. This connection is intriguing in light of the role played by systems of distinct representatives in the study of the Quantum Satisfiability Problem \cites{LMSS10,AdBGS21}.

To further illustrate our construction, we conclude our paper with a study of the cohomology of the Maurer-Cartan algebra of $\mathcal L(G)$. One novelty is that while in the case of graphs the Cartan-Chevalley-Eilenberg algebra of the corresponding Dani-Mainkar Lie algebra is always finite dimensional, this is not necessarily the case for simple hypergraphs with edges containing an odd number of vertices. Nevertheless, we prove that $H^i(\mathcal L(G))$ is finite dimensional for every integer $i$ and for every simple finite hypergraph $G$. Hence the dimensions of these spaces can be thought of as invariants of the hypergraph $G$ which, at least in simple cases, we are able to explicitly express in terms of combinatorial data on $G$.

\section{The 2-step nilpotent $L_\infty$-algebra of a hypergraph}

We find it convenient to use grading conventions for $L_\infty$-algebras adapted from \cite{Getzler09}, the original source for nilpotent $L_\infty$-algebras.

\begin{mydef}\label{def:1}
Let $\mathfrak g=\bigoplus_i \mathfrak g^i$ be a $\mathbb Z$-graded real vector space that is finite dimensional in each degree and bounded below. Let $\mathfrak g[1]^\vee$ be the shifted dual, so that $(\mathfrak g[1]^\vee)^i$ is the dual vector space of $\mathfrak g^{1-i}$. For each $k>0$, let $l_k$ be a sequence of graded antisymmetric operations on $\mathfrak g$ of degree $2-k$  (i.e.\ $l_k:\BigWedge^k \mathfrak g\to \mathfrak g$ is homogeneous linear of degree 2) and let $d_k:\mathfrak g[1]^\vee\to {\rm Sym}(\mathfrak g[1]^\vee)$ be the corresponding (degree 1) transpose maps. We say that the maps $\{l_k\}$ form an {\it $L_\infty$-algebra (or strong homotopy Lie algebra)} on $\mathfrak g$ if the unique extension of $d=\sum_{k=1}^\infty d_k$ to ${\rm Sym}(\mathfrak g[1]^\vee)$ by graded derivation is a differential i.e.\ $d^2=0$. If this is the case, we denote by $C^\bullet(\mathfrak g)=({\rm Sym}(\mathfrak g[1]^\vee),d)$ its {\it Maurer-Cartan algebra} (an explanation for this terminology may be found in \cite{Huebschmann17})   and by $H^\bullet(\mathfrak g)$ the corresponding cohomology.
\end{mydef}

\begin{rem} In \cite{Huebschmann10} and references therein, an $L_\infty$-structure on
a graded vector space $\mathfrak g$
is defined as a coalgebra differential on the graded
symmetric coalgebra $S'[s\mathfrak g]$ on the suspension $s\mathfrak g$ of $\mathfrak g$. Under the circumstances of the present paper, this characterization is equivalent to Definition \ref{def:1} above.
\end{rem}

\begin{mydef}
Two $L_\infty$-algebras with underlying graded vector spaces $\mathfrak g_1$, $\mathfrak g_2$ are {\it isomorphic} if there exists an isomorphism $f:\mathfrak g_1\to \mathfrak g_2$ of graded vector spaces such that the induced map $f^\bullet:C^\bullet(\mathfrak g_2)\to C^\bullet(\mathfrak g_1)$ of differential graded algebras is an isomorphism.
\end{mydef}

\begin{rem}
Equivalently, $f:\mathfrak g_1\to \mathfrak g_2$ induces an isomorphism of $L_\infty$-algebras if $l_{k,2} (f(x_1),\ldots,f(x_k))=f(l_{k,1}(x_1,\ldots,x_k))$ for each $k\ge 1$ and  $x_1,\ldots,x_k\in \mathfrak g_1$, where $l_{k,1}$ (resp.\ $l_{k,2}$) are the $L_\infty$ operations on $\mathfrak g_1$ (resp.\ on $\mathfrak g_2$).
\end{rem}

\begin{mydef}
The {\it lower central filtration} \cite{Getzler09} of an $L_\infty$-algebra $\{l_k\}$ on $\mathfrak g$ is the canonical decreasing filtration defined inductively by $F^1\mathfrak g=\mathfrak g$ and, for each $i>1$,
\begin{equation}
    F^i\mathfrak g = \sum_{i_1+\cdots +i_k=i} l_k(F^{i_1}\mathfrak g,\ldots,F^{i_k}\mathfrak g)\,.
\end{equation}
An $L_\infty$-algebra $\mathfrak g$ is {\it nilpotent} if $F^N\mathfrak g=0$ for all sufficiently large $N$. 
\end{mydef}

\begin{mydef}
We say that an $L_\infty$-algebra $\{l_k\}$ on $\mathfrak g$ is {\it 2-step nilpotent} if 
\begin{equation}\label{eq:2}
    l_k(l_j(\mathfrak g,\ldots, \mathfrak g),\mathfrak g,\ldots,\mathfrak g)=0
\end{equation}
for all $k,j\ge 1$. 
\end{mydef}

\begin{rem}
Let $\{l_k\}$ be a 2-step nilpotent $L_\infty$-algebra on $\mathfrak g$ such that $l_k=0$ for all $k\ge N$. Then $F^N\mathfrak g=0$, and thus $(\mathfrak g,\{l_k\})$, is nilpotent. 
\end{rem}

\begin{mydef}
We define the {\it commutator} of an $L_\infty$-algebra $\{l_k\}$ on $\mathfrak g$ to be the $L_\infty$-subalgebra 
\begin{equation}
    l(\mathfrak g)=\sum_{k=1}^n l_k(\mathfrak g,\ldots,\mathfrak g)\,.
\end{equation}
\end{mydef}

\begin{example}
If $\mathfrak g=\mathfrak g^0$ and $l_k=0$ for all $k\neq 2$, then $(\mathfrak g, l_2)$ is an $L_\infty$-algebra if and only if it is a Lie algebra. In this case, $l(\mathfrak g)=l_2(\mathfrak g,\mathfrak g)=[\mathfrak g,\mathfrak g]$ and $C^\bullet(\mathfrak g)$ is the ordinary Cartan-Chevalley-Eilenberg algebra of the Lie algebra $\mathfrak g$. Moreover, $\mathfrak g$ is 2-step nilpotent as an $L_\infty$-algebra if and only if it is 2-step nilpotent as a Lie algebra. 
\end{example}

\begin{mydef}
A {\it hypergraph} $G$ is a set (the {\it vertices}) $V(G)$ together with, for each $k>0$, collections (the $k$-{\it edges}) of (unordered) $k$-element subsets of $V(G)$. We use the notation $E(G)=\bigcup_k E_k(G)$ for the set of all {\it edges} of $G$. We say that $G$ is {\it finite} if $V(G)$ is a finite set. We say that $G$ is ${\it simple}$ if the symmetric difference of any two distinct edges is nonempty. A hypergraph $G$ is {\it $k$-uniform} if $E_i(G)= \emptyset$ for all $i\neq k$.
\end{mydef}

\begin{mydef}\label{def:6}
Let $G$ be a finite simple hypergraph with $n$ vertices and let $V$ be the real vector space generated the elements $x_i$ labeled by $i\in V(G)$. For each $k>0$ let $W_k$ be the subspace of $\BigWedge^k V$ generated by $x_{i_1}\wedge \cdots \wedge x_{i_k}$ such that $\{i_1,\ldots,i_k\}\notin E_k(G)$. {\it The 2-step nilpotent $L_\infty$-algebra of $G$} has underlying graded vector space \begin{equation}
    \mathcal L(G) = V[0] \oplus (V/W_1)[1]\oplus((\BigWedge^2 V)/W_2)[0]\oplus\cdots  \oplus  ((\BigWedge^n V)/W_n)[2-n]\,.
\end{equation}
The $L_\infty$-algebra operations $l_k$ are given by the canonical projections $\BigWedge^k V\to (\BigWedge^k V)/W_k$ i.e.\
\begin{equation}
    l_k(x_{i_1},\ldots,x_{i_k})=x_{i_1}\wedge \cdots \wedge x_{i_k} \mod W_k
\end{equation}
for all $x_{i_1},\ldots,x_{i_k}\in V$. In all other cases (when at least one of the arguments is in $(\BigWedge^p V)/W_i[2-p]$ for some $p\in\{1,\ldots,n\}$), $l_k$ is declared to vanish.
\end{mydef}

\begin{rem}
If $G$ is a finite simple graph, then $\mathcal L(G)$ coincides with the 2-step nilpotent Lie algebra constructed by Dani and Mainkar \cite{DaniMainkar05}.
\end{rem}

\begin{rem}
In practice, it is convenient to fix an order on $V(G)$, or equivalently, to identify $V(G)$ with $\{1,\ldots,|V(G)|\}$ with the standard order. Then $\BigWedge^k V/W_k$ acquires a canonical basis consisting of elements of the form $x_{i_1}\wedge \cdots \wedge x_{i_k}$ where $i_1<i_2<\cdots<i_k$ and $\{i_1,\ldots,i_k\}\in E_k(G)$. Let $x_i^*$, $x_{\{i_1,\ldots,i_k\}}^*$ be the elements of the dual basis labeled by all vertices $i\in V(G)$ and edges $\{i_1,\ldots, i_k\}\in E(G)$. Then the components $d_k$ of the differential of the Maurer-Cartan algebra $C^\bullet(\mathcal L(G))$ can be written explicitly as $d_k(x_{\{i_1,\ldots,i_k\}}^*)=x_{i_1}^*\cdots x_{i_k}^*$.  
\end{rem}

\begin{theorem}
Two finite simple hypergraphs $G$ and $G'$ are isomorphic if and only if $\mathcal L(G)$ and $\mathcal L(G')$ are isomorphic as $L_\infty$-algebras.
\end{theorem}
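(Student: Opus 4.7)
The forward direction is routine: a hypergraph isomorphism $\sigma:V(G)\to V(G')$ preserving $E_k$ for every $k$ induces a linear isomorphism $V\to V'$ via $x_i\mapsto x_{\sigma(i)}$, whose exterior powers carry each $W_k$ bijectively onto $W_k'$; assembled in their respective degrees, these furnish a graded linear isomorphism $\mathcal L(G)\to\mathcal L(G')$ that intertwines $l_k$ with $l'_k$ by construction.

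For the converse, let $f:\mathcal L(G)\to\mathcal L(G')$ be an $L_\infty$-isomorphism. \emph{Step 1:} Since $f$ preserves the commutator $l(\mathcal L(G))=\bigoplus_{k\geq 1}(\BigWedge^k V)/W_k$ (each summand in its own degree), it descends to an isomorphism on $\mathcal L(G)/l(\mathcal L(G))\cong V[0]$, yielding a linear isomorphism $A:V\to V'$. Writing $f|_V=A+\beta$ with $\beta:V\to(\BigWedge^2 V')/W_2'$ (the other summand of $\mathcal L(G')^{[0]}$), \emph{Step 2:} I exploit the compatibility $f\circ l_k=l'_k\circ f^{\wedge k}$: since each $l'_k$ annihilates any argument coming from $l(\mathcal L(G'))$, the $\beta$-contributions drop out and one is left with
\[
f\bigl(x_{i_1}\wedge\cdots\wedge x_{i_k}\bmod W_k\bigr)=A(x_{i_1})\wedge\cdots\wedge A(x_{i_k})\bmod W_k'
\]
for every $k\geq 1$. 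For $k\neq 2$ the piece $(\BigWedge^k V)/W_k$ is the only summand of $\mathcal L(G)$ in degree $2-k$, so this identity immediately forces $A^{\wedge k}(W_k)=W_k'$. For $k=2$ there is a bookkeeping subtlety, as $V'$ and $(\BigWedge^2 V')/W_2'$ share degree $0$; matching components of the identity shows that the would-be cross-term $(\BigWedge^2 V)/W_2\to V'$ of $f$ must vanish, and one again recovers $A^{\wedge 2}(W_2)=W_2'$.

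\emph{Step 3}, the heart of the argument and the main obstacle, is to deduce from the purely linear-algebraic condition $A^{\wedge k}(W_k)=W_k'$ (for every $k\geq 1$) the existence of a hypergraph isomorphism $G\cong G'$. This generalizes Mainkar's theorem \cite{Mainkar15} for ordinary graphs, which handles only the $k=2$ case; the new difficulty is that the constraints coming from different edge cardinalities must be coordinated simultaneously. Fixing vertex bases so that $A=(a_{ji})$, the hypothesis becomes the vanishing of every minor $\det(A[J'|I])$ with $I\notin E_k(G)$ and $J'\in E_k(G')$ of matching cardinality, together with invertibility of the $E_k(G')\times E_k(G)$ minor matrix. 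My plan is to induct on $|V(G)|$: using automorphisms of $\mathcal L(G')$ (generated by hypergraph automorphisms of $G'$, diagonal scalings, and shears $x_i\mapsto x_i+z$ with $z\in l(\mathcal L(G'))^{[0]}$) to renormalize $A$, I would locate a vertex $i\in V(G)$ whose column can be reduced to a single basis vector $x'_{\sigma(i)}$, then restrict to the sub-$L_\infty$-algebras $\mathcal L(G\setminus\{i\})\hookrightarrow\mathcal L(G)$ and $\mathcal L(G'\setminus\{\sigma(i)\})\hookrightarrow\mathcal L(G')$ and invoke the inductive hypothesis. The bulk of the combinatorial work lies in verifying that this normalization is compatible with every $W_k'$-constraint simultaneously; as a technical aid it should help to first peel off the highest-cardinality edges (where the non-edge condition cuts out the largest subspace) before descending to lower cardinalities.
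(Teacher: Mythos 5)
Your forward direction and the reduction in Steps 1--2 to a single linear isomorphism $A:V\to V'$ satisfying $A^{\wedge k}(W_k)=W_k'$ for all $k$ are sound, and agree in substance with the paper's setup (there $A$ is the map $\pi\circ f|_V$). The problem is Step 3, which you yourself call ``the heart of the argument'': as written it is a plan, not a proof, and the plan has concrete difficulties. First, the ``shears'' $x_i\mapsto x_i+z$ with $z\in l(\mathcal L(G'))$ act trivially on $V'$ modulo the commutator --- composing $f$ with such an automorphism changes $\beta$ but leaves $A=\pi\circ f|_V$ untouched --- so they cannot be used to renormalize $A$; the only automorphisms genuinely available for that purpose are the elements of $\mathcal G'$, which is exactly the group whose structure is in question. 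Second, the existence of a vertex $i$ whose column of $A$ can be reduced to a single basis vector $x'_{\sigma(i)}$ compatibly with \emph{every} $W_k'$ simultaneously is essentially the theorem itself in embryonic form; nothing in the sketch establishes it, and this is precisely where Mainkar's argument for graphs is already nontrivial. Third, even granting such a reduction, the inductive step requires $A$ to carry ${\rm span}(x_j:j\neq i)$ into ${\rm span}(x'_{j'}:j'\neq\sigma(i))$ so that $f$ restricts to an isomorphism $\mathcal L(G\setminus\{i\})\to\mathcal L(G'\setminus\{\sigma(i)\})$, and this is not arranged.

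The paper closes this gap by a different mechanism, following Mainkar: it transports the vertex basis of $G$ through $f$ to obtain a second hypergraph $G''$ on the vector space $V'$ with $G\cong G''$, observes that the diagonal subgroups $D'$ and $D''$ associated to the two bases are (after complexification) maximal tori in the algebraic group $\mathcal G'$ of automorphisms of $V'$ preserving all the $W_k'$, and invokes conjugacy of maximal tori to produce $g\in\mathcal G'$ aligning the two bases; a choice of eigenvalues whose $k$-fold products are pairwise distinct then forces this alignment to match edges with edges, giving $G'\cong G''\cong G$. To complete your argument you would either need to carry out the combinatorial induction of Step 3 in full, or replace it with this torus-conjugacy argument.
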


\begin{proof} The particular case in which $G$ and $G'$ are graphs is proved in \cite{Mainkar15}. The general case is proved along the same lines, we provide a sketch for sake of completeness. If $f:G\to G'$ is an isomorphism of hypergraphs, it is easy to see that the induced map $f:\mathcal L(G)\to \mathcal L(G')$ is an isomorphism of $L_\infty$-algebras. For the converse, assume that $f:\mathcal L(G)\to \mathcal L(G')$ is an isomorphism of $L_\infty$-algebras and that $E_k(G)$ and $E_k(G')$ are empty for all $k$ greater than a fixed $n\in \mathbb N$.  Let $V$ be as in Definition \ref{def:6} i.e.\ the vector space generated by vectors $x_i$ labeled by $i\in V(G)$. Let $T$ be the group of all $L_\infty$-algebra automorphisms $\tau$ of $\mathcal L(G)$ such that $\tau_{|_V}\in {\rm GL}(V)$. Let $\mathcal G\subseteq {\rm GL}(V)$ be the subgroup of all linear automorphisms of the form form $\tau_{|_V}$ for some $\tau\in T$. Equivalently, $\mathcal G$ is the group of automorphisms $g$ of $V$ such that the induced maps $\wedge^k g:\BigWedge^k V\to \BigWedge^k V$ preserve $W_k$ for all $k=1,\ldots,n$. Since these conditions can be expressed as polynomial equations, $\mathcal G$ is an algebraic group. Let $D$ be the subgroup of ${\rm GL}(V)$ consisting of all automorphisms diagonalized by the standard basis of vertices of $G$. Since $D(W_k)\subseteq W_k$ for all $k=1,\ldots,n$, then $D$ is a subgroup of $\mathcal G$.

We construct a new hypergraph $G''$ in the following way. Let $V'$ be the vector space generated by $V(G')$ and let $W_k'$ be such that
\begin{equation}
\mathcal L(G') = V'[0] \oplus (V'/W'_1)[1]\oplus((\BigWedge^2 V')/W_2')[0]\oplus\cdots  \oplus  ((\BigWedge^n V')/W_n')[2-n]\,,
\end{equation}
where $n = |V(G')|$.
Let $\pi:\mathcal L(G')\to V'$ be the canonical projection. We define $G''$ to be the hypergraph with vertices 
\begin{equation}
    V(G'')=\{\pi(f(x_i))\,|\,i\in V(G)\}
\end{equation}
and edges
\begin{equation}
    E_k(G'')=\{\{\pi(f(x_{i_1})),\ldots,\pi(f(x_{i_k}))\}\,|\,\{i_1,\ldots,i_k\}\in E_k(G)\}\,.
\end{equation}
Let $l(\mathcal L(G'))$ be the commutator of $\mathcal L(G')$. Since $f$ is an isomorphism of $L_\infty$-algebras, we have $\sum_{i \in V(G)}a_i\pi(f(x_i)) = 0$. This implies that $f\left(\sum_{i \in V(G)}a_i x_i \right)$ is an element of $l(\mathcal L(G'))$ and thus $\sum_{i \in V(G)}a_i x_i$ is an element of $l(\mathcal L(G))$. On the other hand, by construction $x_i\in V$ for every $i\in V(G)$. Therefore, $\sum_{i \in V(G)}a_ix_i = 0$. Since $\{x_1,\ldots,x_{n}\}$ is a linearly independent set, then  $a_i = 0$ for all $i\in V(G)$. Since $\mathcal L(G)$ and $\mathcal L(G')$ are isomorphic, then
\begin{equation}
    |V(G)|=\dim (\mathcal L(G)/l(\mathcal L(G)))=\dim (\mathcal L(G')/l(\mathcal L(G')))=|V(G')|\,. 
\end{equation}
On the other hand, $|V(G')|=|V(G'')|$ because $V(G')$ and $V(G'')$ are both bases of the same vector space. Hence $\pi\circ f:V(G)\to V(G'')$ is a bijection identifying edges of $G$ with edges of $G''$ and thus defining a graph isomorphism.

We claim that the subgroup $D''\subseteq {\rm GL}(V')$ consisting of elements diagonalized by the basis $V(G'') = \{\pi(f(x_i))\, |\, i \in V(G)\}$ is a subgroup of $\mathcal G'$. Let $d'' \in D''$ with eigenvalues $d''_i$ such that, $d''(\pi(f(x_i))) = d''_i\pi(f(x_i))$ for some nonzero $d''_i \in \mathbb R$ and for all $i \in V(G)$. Take $i_1',\ldots, i_k' \in V(G')$ where $\{i_1',\ldots , i_k'\} \notin E_k(G')$. Since  $x_{i_j'}\in V'$ and $V(G'')$ is a basis of $V'$, we can represent these elements as 
\begin{equation}
    x_{i_j'} = \displaystyle{\sum_{p \in V(G)}a_{p,j}\pi(f(x_p))}
    \end{equation}
for suitable coefficients $a_{p,j}\in \mathbb R$. We observe that the subspace $W_k'$ is invariant under the action of $d''$ in the following way. Since $l_k(x_{i_1'},\ldots, x_{i_k'}) = 0 \mod W_k'$, we see that
\begin{equation}
    l_k(x_{i_1'},\ldots, x_{i_k'}) = l_k\left( \sum_{p\in V(G)}a_{p,1}\pi(f(x_p)), \ldots , \sum_{p\in V(G)}a_{p,k}\pi(f(x_p)) \right) = 0 \mod W_k'\,.
\end{equation}
Therefore, since $l_k(v_1'+w_1',\ldots, v_k'+w_k') = 0$ if and only if $l_k(v_1',\ldots, v_k') = 0$ for all $v_j' \in V'$ and $w_j' \in \bigoplus_{i= 1}^n\left(\BigWedge^iV'\right)/W_i'$,
\begin{equation}    
    l_k\left( \sum_{p\in V(G)}a_{p,1}x_p, \ldots , \sum_{p\in V(G)}a_{p,k}x_p  \right) = 0\mod W_k\,.
\end{equation}
Let $\sigma \in {\rm GL}(V)$ such that $\sigma(x_i) = d_i''x_i$ for each $i \in V(G)$. Then $\sigma$ is in $D$, the collection of automorphisms of $V$ diagonalized by the $\{x_i\}_{i\in V(G)}$. Since $D$ is a subgroup of $\mathcal G$, we have 
\begin{equation}
 l_k\left( \sum_{p\in V(G)}a_{p,1}\sigma(x_p),\ldots ,\sum_{p \in V(G)}a_{p,k}\sigma(x_p)\right) = 0 \mod W_k\,,
\end{equation}
which implies 
\begin{equation}
    l_k\left( \sum_{p \in V(G)}a_{p,1} d_{i_1}''x_p,\ldots, \sum_{p\in V(G)}a_{p,k} d_{i_k}''x_p \right)=0 \mod W_k \,,
\end{equation}
and thus 
\begin{equation}
l_k \left( \sum_{p\in V(G)}a_{p,1} d_{i_1}''\pi(f(x_p)),\ldots , \sum_{p \in V(G)}a_{p,k} d_{i_k}''\pi(f(x_p)) \right) = 0 \mod W_k'\,.
\end{equation}
Hence $l_k(d''(x_{i_1}),\ldots , d''(x_{i_k})) =0$ modulo $W_k'$ and $l_k(d''(x_{i_1'}),\ldots, d''(x_{i_k'})) = 0$ in $\mathcal L(G')$. Hence $D''$ is a subgroup of $\mathcal G'$. Upon passing to the complexification of all vector spaces involved, $D'$ and $D''$ are maximal tori in the connected component of the identity of the algebraic group $\mathcal G'$. Therefore (see e.g.\ \cite{LAG09}) $D'$ and $D''$ are conjugated i.e.\  there exists an element $g \in \mathcal G'$ such that 
\begin{equation}
D'=gD''g^{-1}\,.
\end{equation}
Then for each $i' \in V(G')$, choose nonzero scalars $d_{i'}'$ such that (unless $\{i_1',\ldots, i_m'\}=\{j_1',\ldots, j_m'\}$), 
\begin{equation}\label{eq:17}
{\prod_{l=1}^m d_{i_l}' \neq \prod_{l=1}^m d_{j_l}'}\,.
\end{equation} Then, it follows that for each $d'\in D'$, there exists an element $d'' \in D''$ such that $d' = gd''g^{-1}$. Then $d'$ and $d''$ are similar and share the same eigenvalues and so are the same up to a permutation. Therefore, a bijection $f'$ exists between $V(G')$ and $V(G'')$ such that
\begin{equation}
d''(f'(x_{i'})) = d_i'f'(x_{i'}), \quad \forall i' \in V(G').
\end{equation}
Lastly, we establish that $\{i_1',\ldots, i_k'\} \in E_k(G')$ if and only if $\{f'(i_1'),\ldots, f'(i_k')\} \in E_k(G'')$.
We know that $\{i_1',\ldots, i_k'\} \in E_k(G')$ if and only if $l_k(x_{i_1'},\ldots, x_{i_k'}) \neq 0$ and $\{f'(i_1'),\ldots , f'(i_k')\} \in E_k(G'')$ if and only if $l_k(f'(x_{i_1'}),\ldots , f'(x_{i_k'}))\neq 0$. Then $f'(x_{i'})= \pi(f(x_i))$ for some $i \in V(G)$, so $\{f'(i_1'),\ldots, f'(i_k')\} \in E_k(G'')$ if and only if $\{i_1,\ldots, i_k\} \in E_k(G)$. Since $f$ is an $L_\infty$-algebra isomorphism and $\pi$ is the canonical linear map, it follows that $\{i_1,\ldots, i_k\} \in E_k(G)$ if and only if $l_k(\pi(f(x_{i_1})),\ldots , \pi(f(x_{i_k}))) \neq 0$ in $\mathcal L(G')$. If follows from \eqref{eq:17} that $\prod_{j=1}^kd_{i_j'}'$ is an eigenvalue of an extended automorphism $d'$ of $\mathcal L(G')$ if and only if $l_k(f'(x_{i_1'}),\ldots , f'(x_{i_k'})) \neq 0$ in $\mathcal L(G'')$. Since $d' = gd''g^{-1}$ for some $g \in \mathcal G'$,  
\begin{equation}
    l_k(g^{-1}(d'(x_{i_1'})),\ldots, g^{-1}(d'(x_{i_k'})))  = \left( \prod_{j=1}^kd_{i_j'}'\right) l_k(g^{-1}(x_{i_1'}),\ldots, g^{-1}(x_{i_k'})).
\end{equation}
Moreover, $g$ being an $L_\infty$-automorphism, $l_k(g^{-1}(x_{i_1'}),\ldots, g^{-1}(x_{i_k'}))\neq 0$ if and only if $l_k(x_{i_1'},\ldots, x_{i_k'}) \neq 0$. Thus, $\prod_{j=1}^kd_{i_j'}'$ is an eigenvalue of $d''$ if and only if $ \prod_{j=1}^kd_{i_j'}'$ is an eigenvalue of $d'$. Therefore, $\{i_1',\ldots, i_k'\} \in E_k(G')$ if and only if $\{f'(i_1'),\ldots, f'(i_k')\} \in E_k(G'')$ from which we conclude that $f'$ is a graph isomorphism between $G'$ and $G''$.


\end{proof}

\section{System of Distinct Representatives and Symplectic Forms}

\begin{mydef}
Let $G$ be a hypergraph. A {\it system of distinct representatives} is an injection $f:E(G)\to V(G)$ such that $f(e)\in e$ for every $e\in E(G)$.
\end{mydef}

\begin{example}\label{ex:9}
If $G$ is a (finite simple) graph, then $G$ has a system of distinct representatives if and only if every connected component of $G$ has at most as many edges as vertices. For more general hypergraphs, the condition of having a system of distinct representatives is less trivial. For instance, if $V(G)=\{1,2,3,4,5,6\}$ and $E(G)=\{\{1,2\},\{1,3\},\{2,3\},\{3,4\},\{2,4\},\{4,5,6\}\}$ then $G$ is connected and $|E(G)|=|V(G)|$ but, for instance by applying Hall's Marriage Theorem (see e.g.\ \cite{Jukna11}), $G$ does not have a system of distinct representatives.
\end{example}

\begin{mydef}
A {\it symplectic form} on an $L_\infty$-algebra on the graded vector space $\mathfrak g$ is a linear map $\omega:\mathfrak g\otimes \mathfrak g\to \mathbb R$
that is non-degenerate and closed as an element of $C^\bullet(\mathfrak g)$.
\end{mydef}

\begin{rem}
The following Lemma is a generalization of Proposition 4.8 in \cite{DottiTirao00} to 2-step nilpotent $L_\infty$-algebras.
\end{rem}

\begin{lem}\label{lem:10}
Let $(\mathfrak g,\{l_k\})$ be a 2-step nilpotent $L_\infty$-algebra with symplectic form $\omega$. Then $2\dim l(\mathfrak g)\le \dim \mathfrak g$.
\end{lem}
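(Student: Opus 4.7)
The plan is to establish that $l(\mathfrak{g})$ is a totally isotropic subspace for $\omega$; the desired bound then follows from the standard linear algebra of non-degenerate bilinear forms. Concretely, if $l(\mathfrak{g})\subseteq l(\mathfrak{g})^\perp$, then non-degeneracy gives $\dim l(\mathfrak{g})^\perp = \dim \mathfrak{g} - \dim l(\mathfrak{g})$, whence $2\dim l(\mathfrak{g}) \le \dim \mathfrak{g}$.

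The first step is to unpack the closedness condition $d\omega = 0$ into a family of identities among the $l_k$. Since $\omega$ sits in $\mathrm{Sym}^2(\mathfrak{g}[1]^\vee)$ and $d=\sum_{m\ge 1} d_m$ acts as a graded derivation with $d_m$ raising symmetric degree by $m-1$, the equation $d\omega=0$ splits into one identity in each symmetric degree $n+1$. Dualizing through $d_n \leftrightarrow l_n$, this identity takes the form
\begin{equation*}
\sum_\sigma \epsilon(\sigma)\,\omega\bigl(l_n(c_{\sigma(1)},\ldots,c_{\sigma(n)}),\, c_{\sigma(n+1)}\bigr)=0
\end{equation*}
for all $c_1,\ldots,c_{n+1}\in\mathfrak{g}$, the sum running over $(n,1)$-unshuffles $\sigma$ with $\epsilon(\sigma)$ the Koszul sign. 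In the classical Lie case ($n=2$) this recovers the familiar cocycle identity used in \cite{DottiTirao00}.

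The second step exploits this identity to force isotropy. Given typical generators $x=l_j(a_1,\ldots,a_j)$ and $y=l_k(b_1,\ldots,b_k)$ of $l(\mathfrak{g})$, I substitute $n=j$ and $(c_1,\ldots,c_{j+1})=(a_1,\ldots,a_j,y)$. Every term in which $y$ lands inside the $l_j$-slot has the form $l_j(\ldots,l_k(b_1,\ldots,b_k),\ldots)$, which vanishes by the 2-step nilpotency hypothesis \eqref{eq:2}. Hence only the term with $y$ in the $\omega$-slot survives, giving $\pm\omega(l_j(a_1,\ldots,a_j),y)=\pm\omega(x,y)=0$. Since $l(\mathfrak{g})$ is spanned by such elements, $\omega$ vanishes on $l(\mathfrak{g})\otimes l(\mathfrak{g})$. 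The third step is then the one-line dimension count above, applied to the isotropic subspace $l(\mathfrak{g})$.

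The principal obstacle is the bookkeeping in the first step: translating the CCE differential acting on a degree-two element of the symmetric coalgebra on $\mathfrak{g}[1]^\vee$ into a concrete identity for $\omega$ and the $l_n$, with Koszul signs matching the paper's shifted grading conventions. Once that identity is in hand, however, the 2-step nilpotency condition kills the unwanted terms outright, so no delicate sign cancellation is needed in the second step.
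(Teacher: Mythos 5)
Your proposal is correct and follows essentially the same route as the paper: both show that $l(\mathfrak g)$ is isotropic by evaluating the degree-$(n+1)$ component of $d\omega=0$ on arguments $(a_1,\ldots,a_j,\,l_k(b_1,\ldots,b_k))$, observing that 2-step nilpotency kills every unshuffle term except $\omega(l_j(a_1,\ldots,a_j),l_k(b_1,\ldots,b_k))$, and then conclude with the standard dimension count for isotropic subspaces of a non-degenerate form. Your write-up merely makes the unshuffle expansion of the Maurer--Cartan differential more explicit than the paper's one-line computation.
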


\begin{proof} Let $x_1,\ldots, x_k,y_1,\ldots,y_j\in \mathfrak g$. Since $\mathfrak g$ is 2-step nilpotent, then
\begin{equation}
l_k(x_1,\ldots,x_{p-1},\widehat{x_p},x_{p+1},\ldots,x_k,l_j(y_1,\ldots,y_j))=0
\end{equation}
for every $p\in \{1,\ldots,k\}$.
Taking into account the definition of the differential of the Maurer-Cartan algebra, and the fact that $\omega$ is closed, we obtain
\begin{equation}
\omega(l_k(x_1,\ldots,x_k),l_j(y_1,\ldots,y_j))=(d_k\omega)(x_1,\ldots,x_k,l_j(y_1,\ldots,y_j))=0\,.
\end{equation}
Hence $l(\mathfrak g)$ is an isotropic subspace for the bilinear form $\omega$ on the real vector space underlying $\mathfrak g$. As a result, the dimension of $l(\mathfrak g)$ is at most half of the dimension of $\mathfrak g$, which is what we needed to prove. 
\end{proof}

\begin{rem}\label{rem:11}
The above calculation shows that a symplectic form $\omega$ on a 2-step nilpotent $L_\infty$-algebra $\mathfrak g$ defines an injective linear map $\omega^\flat:l(\mathfrak g) \to (\mathfrak g/l(\mathfrak g) )^\vee$. In the particular case of $L_\infty$-algebras of the form $\mathcal L(G)$ for some finite simple hypergraph $G$ endowed with a system of distinct representatives $f:E(G)\to V(G)$, we also have an induced map from $l( \mathcal L(G))={\rm span}_\mathbb R(E(G))$ to $(\mathcal L(G)/(l(\mathcal L(G))))^\vee={\rm span}_\mathbb R(V(G))^\vee$ defined by $x_I\mapsto x_{f(I)}^*$ for every $I\in E(G)$. The connection between these two constructions is provided by the following theorem. 
\end{rem}

\begin{theorem}\label{thm:13}
Let $G$ be a finite simple hypergraph. The 2-step nilpotent $L_\infty$-algebra $\mathcal L(G)$ is symplectic if and only if $|E(G)|+|V(G)|$ is even and $G$ has a system of distinct representatives.
\end{theorem}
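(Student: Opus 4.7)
The plan is to treat the two implications separately.

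For the sufficiency direction, assume $|V(G)| + |E(G)|$ is even and that $G$ admits an SDR $f : E(G) \to V(G)$. My plan is to exhibit an explicit symplectic form
\[
\omega = \sum_{I \in E(G)} x_I^* \wedge x_{f(I)}^* + \omega_0,
\]
where $\omega_0$ is a standard symplectic form on $\Span_{\R}(V(G) \setminus f(E(G)))$; such a form exists because the parity hypothesis forces $|V(G)| - |E(G)|$ to be even. Closedness is immediate from $f(I) \in I$: each summand $x_I^* \wedge x_{f(I)}^*$ has differential $\pm (\prod_{i \in I} x_i^*) \wedge x_{f(I)}^*$, which vanishes since $x_{f(I)}^*$ appears as a repeated factor. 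Non-degeneracy follows from the block structure of $\omega$ with respect to the basis decomposed into edges, their chosen representatives, and remaining vertices.

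For the necessity direction, assume $\mathcal L(G)$ carries a symplectic form $\omega$. The parity condition is immediate, since a non-degenerate antisymmetric bilinear form requires an even-dimensional space and $\dim \mathcal L(G) = |V(G)| + |E(G)|$. To produce an SDR, by Hall's Marriage Theorem it suffices to verify $|E_U| \le |U|$ for every $U \subseteq V(G)$, where $E_U := \{I \in E(G) : I \subseteq U\}$.

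Fix $U \subseteq V(G)$ and consider the sub-$L_\infty$-algebra $\mathcal L_U \subseteq \mathcal L(G)$ generated by $\{x_u : u \in U\}$, with underlying space $\Span_{\R}(U) \oplus \Span_{\R}(E_U)$ and commutator $l(\mathcal L_U) = \Span_{\R}(E_U)$. The inclusion $\mathcal L_U \hookrightarrow \mathcal L(G)$ is an $L_\infty$-morphism, so the restriction $\omega|_{\mathcal L_U}$ is closed in $C^\bullet(\mathcal L_U)$, and the computation from the proof of Lemma \ref{lem:10} shows that $l(\mathcal L_U)$ is isotropic for $\omega|_{\mathcal L_U}$. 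When $\omega|_{\mathcal L_U}$ happens to be non-degenerate, Lemma \ref{lem:10} applies directly inside $\mathcal L_U$ to yield $2|E_U| \le |U| + |E_U|$, hence $|E_U| \le |U|$.

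The main obstacle is the possible degeneracy of $\omega|_{\mathcal L_U}$. My plan is to pass to the non-degenerate quotient $\mathcal L_U / R$ by the radical $R$ of $\omega|_{\mathcal L_U}$, and to apply the isotropy inequality to the image of $l(\mathcal L_U)$ there. The delicate step is bounding $R \cap l(\mathcal L_U)$: any $y$ in this intersection satisfies $\omega(y, \mathcal L_U) = 0$ by definition of $R$, while $\omega(y, l(\mathcal L(G))) = 0$ by isotropy of $l(\mathcal L(G))$, so the ambient injection $\omega^\flat : l(\mathcal L(G)) \to (\mathcal L(G)/l(\mathcal L(G)))^\vee$ from Remark \ref{rem:11} sends $R \cap l(\mathcal L_U)$ into the annihilator of $U$, yielding $\dim(R \cap l(\mathcal L_U)) \le |V(G) \setminus U|$. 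Combining this with the isotropy inequality on $\mathcal L_U / R$ and with the non-degeneracy of the ambient $\omega$ to control $\dim R$ produces the Hall bound $|E_U| \le |U|$. This final dimension count is the technically delicate part of the proof.
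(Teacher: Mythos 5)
Your sufficiency argument is the same as the paper's (the explicit form $\sum_I x_I^*x_{f(I)}^*$ plus a standard form on the leftover vertices, with closedness coming from the repeated odd factor $x_{f(I)}^*$), and it is correct. The problem is the necessity direction, where the step you flag as ``technically delicate'' is in fact a genuine gap that cannot be closed by any argument of the kind you propose. Your plan uses only two consequences of $d\omega=0$: that $l(\mathcal L(G))$ is isotropic, and that $\omega^\flat:l(\mathcal L(G))\to(\mathcal L(G)/l(\mathcal L(G)))^\vee$ is injective. These two properties do not imply Hall's condition. Take the paper's Example \ref{ex:9}: $V(G)=\{1,\ldots,6\}$ with six edges and no SDR. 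Define a bilinear form by pairing the six edge generators perfectly with the six vertex generators and setting all other components to zero: it is non-degenerate, the commutator is isotropic, and $\omega^\flat$ is an isomorphism, yet Hall fails. So the isotropy-plus-injectivity data you feed into the dimension count is insufficient, and indeed the count itself does not close: isotropy of the image of $\Span_{\R}(E_U)$ in $\mathcal L_U/R$ gives $|E_U|-\dim(R\cap l(\mathcal L_U))\le\tfrac12(|U|+|E_U|-\dim R)$, i.e.\ $|E_U|\le |U|-\dim R+2\dim(R\cap l(\mathcal L_U))$, and your bound $\dim(R\cap l(\mathcal L_U))\le |V(G)\setminus U|$ only yields $|E_U|\le |V(G)|$, which is no better than Lemma \ref{lem:10} applied to the whole algebra.

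What is actually needed, and what the paper uses, is the full strength of closedness in the mixed degrees: the vanishing of the coefficient of $x_{i}^*x_{i_1}^*\cdots x_{i_k}^*$ in $d\omega$ says that if $\varphi_{I}(x_i)\neq 0$ for some edge $I=\{i_1,\ldots,i_k\}$ and some vertex $i\notin I$, then there must exist \emph{another} edge $J\in E_k(G)$ with $i\in J$ and $J\subseteq I\cup\{i\}$. The paper runs this as a propagation argument: starting from a Hall-deficient family $I_1,\ldots,I_r$ (with $|I_1\cup\cdots\cup I_r|<r$), either every $\varphi_{I_p}$ vanishes outside the union --- contradicting the linear independence of the $\varphi_{I_p}$, since $r$ independent functionals cannot be supported on fewer than $r$ vertices --- or closedness produces a new edge that enlarges the family while keeping it deficient; finiteness of $E(G)$ forces termination in the first case. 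Your outline never exploits these mixed components of $d\omega=0$, so even if the linear algebra were tightened it would prove a false statement. To repair the proof you would need to replace the radical/quotient bookkeeping with this closedness-driven propagation (or something equivalent to it).
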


\begin{proof}
Assume $|E(G)|+|V(G)|$ is even and let $f:E(G)\to V(G)$ be a system of distinct representatives. Then $S=V(G)\setminus f(E(G))$ has an even number of elements. Let $\sigma$  be a fixed-point free involution of $S$. Then
\begin{equation}
    \omega=\sum_{I\in E(G)} x_I^* x_{f(I)}^*+\sum_{i\in S, i<\sigma(I)} x_i^*x_{\sigma(i)}^*
\end{equation}
is a symplectic form on $\mathcal L(G)$. For the converse, assume $\omega$ is a symplectic form on $\mathcal L(G)$. By Lemma \ref{lem:10}, this implies that $|E(G)|\le |V(G)|$. Moreover, $\omega$ is of the form
\begin{equation}
    \omega = \sum_{i,j\in V(G)} \omega_{i,j} x_i^* x_j^*+\sum_{I\in E(G)} \varphi_Ix_I^*\,,
\end{equation}
where $\varphi_I=\omega^\flat(x_I)$. Suppose $G$ does not have a system of distinct representatives. By Hall's Marriage Theorem, for some positive integer $r$ there exist edges $I_1,\ldots,I_r\in E(G)$ such that $|I_1\cup\cdots\cup I_r|<r$. Since, by Remark \ref{rem:11}, ${\rm span}_{\mathbb R}(\varphi_{I_1},\ldots,\varphi_{I_r})$ has dimension $r$, there exist  $p\in \{1,\ldots,r\}$ and $i\notin I_1\cup\cdots\cup I_r$ such that $\varphi_{I_p}(x_i)\neq 0$. Let $I_p=\{i_1,\ldots,i_k\}$. In order for $(d\omega)(x_i,x_{i_1},\ldots,x_{i_k})$ to vanish, there has to be $J\in E_k(G)$ such that $I_p\cup J=\{x_i,x_{i_1},\ldots,x_{i_k}\}$ and $i\in J$. Then $|I_1\cup\cdots\cup I_r\cup J|<r+1$. Iterating this construction we add an edge at each step, until eventually we obtain a collection $\{I_1,\ldots,I_r,\ldots,I_{r'}\}$ of edges such that $|I_1\cup\cdots\cup I_{r'}|<r'$ but $\varphi_{I_p}(x_i)=0$ for all $i\notin I_1\cup\cdots\cup I_{r'}$. This contradicts the linear independence of $\{\varphi_{I_1},\ldots,\varphi_{I_{r'}}\}$ and concludes the proof.
\end{proof}

\begin{rem}
Taking Example \ref{ex:9} into account, Theorem \ref{thm:13} can be thought of as generalizing to hypergraphs the characterization of symplectic Lie algebras associated with graphs found in \cite{PouseeleTirao09}.
\end{rem}

\begin{example}
Let $G$ be a $k$-uniform hypergraph with a transfer filtration of type $|V(G)|-|E(G)|+1$ in the sense of \cite{AdBGS21}. Then $G$ has a system of distinct representatives and thus $\mathcal L(G)$ is symplectic.
\end{example}

\section{Cohomology}

\begin{lem}\label{lem:13}
Let $\mathcal C^\bullet$ be a $\mathbb Z_{\ge 0}$-graded vector space, let $d,\Delta:\mathcal C^\bullet \to \mathcal C^{\bullet+1}$ be commuting differentials, and let $\theta:\mathcal C^\bullet\to \mathcal C^{\bullet-1}$ be a linear map such that $\theta\Delta+\Delta\theta={\rm id}_\mathcal C$. Let $\mathcal D^\bullet = \theta\Delta\mathcal C^\bullet$, and let $D=\theta \Delta d:\mathcal D^\bullet\to \mathcal D^{\bullet+1}$. Then 
\begin{enumerate}[1)]
\item$D^2=0$ and there is a long exact sequence
\begin{equation}\label{eq:27}
    \cdots\to H^{i-1}(\mathcal D^\bullet,D)\xrightarrow{\theta d} H^{i-1}(\mathcal D^\bullet,D)\xrightarrow{\Delta} H^i(\mathcal C^\bullet,d)\xrightarrow{\theta \Delta} H^i(\mathcal D^\bullet,D)\xrightarrow{\theta d} H^i(\mathcal D^\bullet,D)\to \cdots
\end{equation}
\item Assume further that $\Delta=d\varphi-\varphi d$ for some linear map $\varphi:\mathcal C^\bullet\to \mathcal C^\bullet$ such that $[\varphi,\theta]=0$. Then $\ker((\theta d)^n)\subseteq {\rm im}(\theta d)$ for all $n\in \mathbb N$.
\end{enumerate}
\end{lem}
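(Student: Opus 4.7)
The proof splits along the two conclusions, which I plan to address in order.

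For the first conclusion, $D^2=0$ should fall out of a short direct computation: expanding $D^2=\theta\Delta d\theta\Delta d$ and repeatedly using $\theta\Delta=\mathrm{id}-\Delta\theta$ together with $d\Delta=\Delta d$ reduces every term to something that vanishes by $d^2=\Delta^2=0$. A useful by-product of this calculation, which I would note separately, is that $\Delta^2=0$ forces $(\theta\Delta)^2=\theta\Delta$, so $\theta\Delta$ is an idempotent whose image is precisely $\mathcal{D}^\bullet$.

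For the long exact sequence my plan is to identify $(\mathcal{D}^\bullet,D)$ with the subcomplex $(\ker\Delta, d)\subset(\mathcal{C}^\bullet, d)$ up to a degree shift. Concretely, the restriction $\Delta|_\mathcal{D}\colon\mathcal{D}^i\to\ker(\Delta)^{i+1}$ is bijective with inverse $y\mapsto\theta y$, and intertwines $D$ with $d$. Combined with the short exact sequence $0\to\ker\Delta\to\mathcal{C}^\bullet\xrightarrow{\Delta}\mathrm{im}\,\Delta\to 0$ of $d$-complexes---in which $\mathrm{im}\,\Delta=\ker\Delta$ thanks to the contracting homotopy $\theta$---this isomorphism produces the displayed long exact sequence after a short bookkeeping step identifying the arrows as $\Delta$, $\theta\Delta$, and the connecting homomorphism $\theta d$.

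For the second conclusion, the extra hypotheses $\Delta=d\varphi-\varphi d$ and $[\varphi,\theta]=0$ can be combined with $\theta\Delta+\Delta\theta=\mathrm{id}$ to yield, after a one-line expansion, the Heisenberg-type commutator
\[
[N,\varphi]=\mathrm{id},\qquad N:=\theta d+d\theta.
\]
Since $[N,d]=0$, the operator $N$ descends to $H^\bullet(\mathcal{C}^\bullet,d)$, and via the identification of Part~1 it corresponds to $\theta d$ on $H^\bullet(\mathcal{D}^\bullet,D)$. The conclusion would then follow from the purely algebraic fact that any pair of operators with $[N,\varphi]=\mathrm{id}$ satisfies $\bigcup_{k}\ker N^k\subseteq\mathrm{im}\,N$; an explicit preimage of $x\in\ker N^n$ is provided by the telescoping sum
\[
y_n=\sum_{j=0}^{n-1}\frac{(-1)^j}{(j+1)!}\,\varphi^{j+1}N^j x,
\]
which, by direct verification using only $[N,\varphi]=\mathrm{id}$ and $N^n x=0$, satisfies $Ny_n=x$.

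The principal obstacle is that $\varphi$ is not a chain map for $d$: indeed $[d,\varphi]=\Delta\neq 0$, so $\varphi$ does not descend directly to an endomorphism of $H^\bullet(\mathcal{D}^\bullet,D)$ (nor of $H^\bullet(\ker\Delta,d)$), and the formula for $y_n$ makes sense only on chains. The technical crux of the argument will therefore be to correct $y_n$---most naturally by projecting along the idempotent $\theta\Delta$ and absorbing $D$-exact terms---so that the chain-level telescoping identity becomes a genuine equality $\theta d[y_n]=[x]$ in $H^\bullet(\mathcal{D}^\bullet,D)$. In effect one must show that the failure of $\varphi$ to commute with $d$, measured exactly by $\Delta$, is absorbed modulo $D$-coboundaries throughout the telescoping.
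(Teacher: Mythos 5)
Your treatment of part 1) is correct and is essentially the paper's argument: the paper works with the short exact sequence of complexes $0\to(\mathcal D^{\bullet-1},-D)\xrightarrow{\Delta}(\mathcal C^\bullet,d)\xrightarrow{\theta\Delta}(\mathcal D^\bullet,D)\to 0$ and identifies the connecting homomorphism as $\theta d$; your version, which transports $(\mathcal D^\bullet,D)$ onto $(\ker\Delta,d)$ via the bijection $\Delta|_{\mathcal D}$ with inverse $\theta$, is the same sequence read through that isomorphism, and your computation of $D^2=0$ and of the idempotency of $\theta\Delta$ is sound.

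Part 2) is where there is a genuine gap, one you flag yourself but do not close. The difficulty is entirely created by your decision to pass to cohomology. As the paper proves it, $\ker((\theta d)^n)\subseteq{\rm im}(\theta d)$ is a chain-level statement about the operator $\theta d$ on $\mathcal D^\bullet$: since $[\varphi,\theta]=0$, one has $[\theta d,\varphi]=\theta(d\varphi-\varphi d)=\theta\Delta$, which restricts to the identity on $\mathcal D^\bullet=\theta\Delta\mathcal C^\bullet$ because $(\theta\Delta)^2=\theta\Delta$. Your telescoping sum, run with $\theta d$ in place of $N$, then closes on the nose, giving $x=\theta d\bigl(\sum_{k=1}^n\frac{1}{k!}\varphi^k(-\theta d)^{k-1}\bigr)x$ whenever $(\theta d)^nx=0$, with no cohomological correction needed. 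Your substitute operator $N=\theta d+d\theta$ does satisfy the cleaner global identity $[N,\varphi]=\theta\Delta+\Delta\theta={\rm id}_{\mathcal C}$, but it is useless on cohomology: for a $d$-cocycle $c$ one has $Nc=d\theta c$, so $N$ induces the \emph{zero} map on $H^\bullet(\mathcal C^\bullet,d)$, and there is no identification of $N$ acting on $H^\bullet(\mathcal C^\bullet,d)$ with $\theta d$ acting on $H^\bullet(\mathcal D^\bullet,D)$ --- these are different complexes, related only by the long exact sequence of part 1), and $N$ does not even preserve $\ker\Delta$ off the $d$-cocycles. Consequently the ``correction of $y_n$ by absorbing $D$-exact terms'' that your plan defers to is precisely the step that does not exist in your setup. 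The argument goes through only if you abandon the passage to cohomology and run the commutator computation directly on $\mathcal D^\bullet$ with the operator $\theta d$, where $[\theta d,\varphi]=\theta\Delta$ acts as the identity.
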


\begin{proof} A lengthy but straightforward verification shows that
\begin{equation}
    0\to (\mathcal D^{\bullet-1}, -D)\xrightarrow{\Delta} (\mathcal C^\bullet, d) \xrightarrow{\theta \Delta} (\mathcal D^\bullet,D)\to 0
\end{equation}
is a short exact sequence of cochain complexes.
Taking the resulting long exact sequence and using the isomorphism $H^{i-1}(\mathcal D^\bullet,D)\cong H^i(\mathcal D^{\bullet-1},-D)$, we obtain \eqref{eq:27}. It is straightforward to check from the definition that the connecting homomorphism is indeed $\theta d$. This proves 1). To prove 2), we first observe that $[\theta d,\varphi]=\theta \Delta$ which acts as the identity on $\mathcal D^\bullet$. If $x\in \mathcal D^\bullet$ is such that $(\theta d)^nx=0$ then
\begin{equation}
    x = \theta d \left(\sum_{k=1}^n
    \frac{\varphi^k (-\theta d)^{k-1}}{k!} \right) x.   
\end{equation}
explicitly showing that $\ker((\theta d)^n)\subseteq {\rm im}(\theta d)$.
\end{proof}

\begin{example}
Let $(\mathcal C^\bullet,d)$ be the Cartan-Chevalley-Eilenberg algebra of a Lie algebra $\mathfrak g$. Let $\alpha\in \mathfrak g^*$ be closed, so that $\ker(\alpha)$ is a codimension 1 ideal of $\mathfrak g$. If $\Delta$ denotes multiplication by $\alpha$ and $\theta$ denotes contraction with an element $x\in \mathfrak g$ such that $\alpha(x)=1$, then $(\mathcal D^\bullet,D)$ is isomorphic to the Cartan-Chevalley-Eilenberg algebra of $\ker(\alpha)$ and \eqref{eq:27} specializes to the Dixmier long exact sequence \cite{Dixmier55}, generalizing \cite{PouseeleTirao09}.
\end{example}

\begin{example}\label{ex:15}
Our main application is to the case in which $(\mathcal C^\bullet,d)$ is the Maurer-Cartan algebra of an $L_\infty$-algebra of the form $\mathcal L(G)$ for some finte simple hypergraph $G$. Generalizing the use of the Dixmier exact sequence in \cite{PouseeleTirao09}, we select a vertex, say, $1\in V(G)$ and remove it from $G$ while keeping track of the edges that contained it. The result is the complex $(\mathcal D^\bullet,D)$ whose underlying graded vector space $\mathcal D^\bullet$ consists of polynomials in ${\rm Sym}((\mathcal L(G))^\vee)$ that do not contain the (odd) variable $x_1^*$. The differential $D$ acts as follows. $D(x_i^*)=0$ for all $i>1$, $D(x_I^*)=0$ if $I$ is an edge of $G$ that contains $1$, and $D(x_{\{i_1,\ldots,i_k\}}^*)=x_{i_1}^*\cdots x_{i_k}^*$ whenever $\{i_1,\ldots,i_k\}\in E_k(G)$ and $i_1<i_2<\cdots<i_k$. If $\Delta$ denotes multiplication by $x_1^*$ and $\theta=\frac{\partial}{\partial x_1^*}$ then it is easy to see that the assumptions of part 1) of Lemma \ref{lem:13} are satisfied. Hence \eqref{eq:27} holds with connecting homomorphism $\delta=\frac{\partial}{\partial x_1^*} \circ d$. In particular, $H^i(\mathcal L(G))$ is generated as a vector space by the coimage of $\delta$ acting on $H^{i-1}(\mathcal D^\bullet,D)$ and the cocycles that map onto the kernel of $\delta$ acting on $H^i(\mathcal D^\bullet,D)$ under the projection $\frac{\partial}{\partial x_1^*} \circ x_1^*$. In the particular case in which $\{1\}\in E_1(G)$, $\Delta=d\varphi-\varphi d$ holds with $\varphi$ being multiplication by $x_{1}^*$. Then the assumptions of part 2) of Lemma \ref{lem:13} are also satisfied. Since $\mathcal D^\bullet$ is generated by monomials of the form $(x_{\{1\}}^*)^rx_{i_1}^*\cdots x_{j_k}^*x_{I_{j_1}}^*\cdots x_{I_{j_l}}^* $,
each of which is $\ker(\delta^N)$ for sufficiently large $N$, then, by Lemma \ref{lem:13}, $\delta$ is surjective. Thus, $H^i(\mathcal L(G))$ is isomorphic, as a vector space, to the kernel of $\delta$ acting on $H^i(\mathcal D^\bullet,D)$.
\end{example}

\begin{theorem}\label{thm:16}
Let $G$ be a finite simple hypergraph. Then $H^i(\mathcal L(G))$ is finite dimensional for every $i\ge 0$.
\end{theorem}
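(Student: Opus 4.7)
The plan is to proceed by induction on $n = |V(G)|$, with the trivial base case $n = 0$. For the inductive step, I would fix a vertex, say $1 \in V(G)$, and work with the subcomplex $(\mathcal D^\bullet, D)$ introduced in Example~\ref{ex:15}. The first key step is to recognize this subcomplex as a tensor product of DGAs. Let $G^-$ be the hypergraph obtained from $G$ by deleting vertex $1$ along with all edges that contain it, and let $A$ be the free graded commutative algebra on generators $x_I^*$ (one per edge $I \ni 1$) placed in degree $|I|-1$, equipped with the zero differential. Inspecting the description of $D$ on generators, one finds an isomorphism of DGAs
\begin{equation*}
(\mathcal D^\bullet, D) \;\cong\; C^\bullet(\mathcal L(G^-)) \otimes A\,,
\end{equation*}
whence the K\"unneth formula gives $H^i(\mathcal D^\bullet, D) \cong \bigoplus_{p+q=i} H^p(\mathcal L(G^-)) \otimes A^q$, with each $H^p(\mathcal L(G^-))$ finite dimensional by the inductive hypothesis.

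The analysis then splits according to whether $\{1\} \in E_1(G)$. If $\{1\} \notin E_1(G)$, every generator of $A$ has positive degree $|I| - 1 \geq 1$, so each $A^q$ and hence $H^i(\mathcal D^\bullet, D)$ is finite dimensional; the portion
\begin{equation*}
H^{i-1}(\mathcal D^\bullet) \xrightarrow{\Delta} H^i(\mathcal L(G)) \xrightarrow{\theta \Delta} H^i(\mathcal D^\bullet)
\end{equation*}
of the long exact sequence from Lemma~\ref{lem:13}(1) then sandwiches $H^i(\mathcal L(G))$ between finite dimensional spaces. If instead $\{1\} \in E_1(G)$, Example~\ref{ex:15} provides the sharper identification $H^i(\mathcal L(G)) \cong \ker(\delta|_{H^i(\mathcal D^\bullet)})$ with $\delta = \theta d$; here the main obstacle arises, because the degree-$0$ polynomial generator $y := x_{\{1\}}^*$ of $A$ makes $H^i(\mathcal D^\bullet)$ genuinely infinite dimensional.

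To finish this second case, I would peel $y$ off by writing $\mathcal D^\bullet \cong \mathbb R[y] \otimes \mathcal D_0^\bullet$ as DGAs, where $\mathcal D_0^\bullet$ is freely generated by every generator of $\mathcal D^\bullet$ other than $y$. Since all remaining $A$-generators have positive degree, the K\"unneth argument above now shows $H^i(\mathcal D_0^\bullet)$ is finite dimensional and $H^i(\mathcal D^\bullet) \cong \mathbb R[y] \otimes H^i(\mathcal D_0^\bullet)$. Because $\delta$ is a degree-$0$ graded derivation satisfying $\delta(y) = 1$ and preserving $\mathcal D_0^\bullet$, it descends to cohomology as $\partial_y \otimes 1 + 1 \otimes \delta_0$ for some endomorphism $\delta_0$ of the finite dimensional space $H^i(\mathcal D_0^\bullet)$. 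Writing an element of $\ker \delta$ as a necessarily finite sum $\sum_{n=0}^{N} y^n h_n$ and matching coefficients of $y^n$ gives the recurrence $(n+1) h_{n+1} = -\delta_0 h_n$, whose solution $h_n = ((-1)^n/n!)\delta_0^n h_0$ has finite support precisely when $h_0$ lies in $\bigcup_N \ker \delta_0^N$. Hence $\ker \delta$ is isomorphic to the locally $\delta_0$-nilpotent subspace of $H^i(\mathcal D_0^\bullet)$, finite dimensional as a subspace of a finite dimensional space.
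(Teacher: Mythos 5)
Your proof is correct and follows the same skeleton as the paper's: induction on $|V(G)|$, the auxiliary complex $(\mathcal D^\bullet,D)$ of Example~\ref{ex:15}, and a case split on whether $\{1\}\in E_1(G)$, with the long exact sequence of Lemma~\ref{lem:13}(1) handling the first case and the identification $H^i(\mathcal L(G))\cong\ker\delta$ from part 2) handling the second. Where you diverge is in the execution of the second case, and your version is arguably tighter on two points. First, you make explicit the isomorphism $(\mathcal D^\bullet,D)\cong C^\bullet(\mathcal L(G^-))\otimes A$; this is needed to legitimately apply the induction hypothesis, since $\mathcal D^\bullet$ is not itself of the form $C^\bullet(\mathcal L(G'))$ for a hypergraph $G'$ (it carries the extra generators $x_I^*$ with $1\in I$, killed by $D$), a point the paper passes over silently. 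Second, for the case $\{1\}\in E_1(G)$ the paper filters the finite-dimensional coefficient space $U^i$ by $\ker\delta\subseteq\ker\delta^2\subseteq\cdots\subseteq\ker\delta^N=U^i$ and bounds the $x_{\{1\}}^*$-degree of the coefficients level by level, whereas you split off $\mathbb R[y]$, write $\delta=\partial_y\otimes 1+1\otimes\delta_0$, and solve the recurrence $(n+1)h_{n+1}=-\delta_0 h_n$ in closed form, identifying $\ker\delta$ with the locally $\delta_0$-nilpotent subspace of $H^i(\mathcal D_0^\bullet)$. This buys you a cleaner conclusion (finite-dimensionality is automatic for a subspace of a finite-dimensional space) and dispenses with the paper's separate claim that $U^i=\ker\delta^N$ for some $N$. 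The one thing you should still say a word about is why $\delta$ and its restriction $\delta_0$ commute with $D$ at the cochain level, so that $\delta_0$ genuinely induces an endomorphism of $H^i(\mathcal D_0^\bullet)$; this is buried in the ``straightforward verification'' of Lemma~\ref{lem:13} and is easy to check on generators, but it is doing real work in your recurrence argument.
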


\begin{proof}
The proof is by induction on $|V(G)|$, using Lemma \ref{lem:13} as described in Example \ref{ex:15}. If $\{1\}\notin E_1(G)$ then, by induction hypothesis, $H^i(\mathcal D^\bullet,D)$ and $H^{i-1}(\mathcal D^\bullet,D)$ are finite-dimensional and hence so is $H^i(\mathcal L(G))$ by part 1) of Lemma \ref{lem:13}. On the other hand, if $\{1\}\in E_1(G)$, then part 2) of Lemma \ref{lem:13} shows that $H^i(\mathcal L(G))$ is isomorphic to the kernel of $\delta=\frac{\partial}{\partial x_1^*}\circ d$ acting on $H^i(\mathcal D^\bullet,D)$. The vector space $H^i(\mathcal D^\bullet,D)$ can be further identified with polynomials in $x_{\{1\}}^*$ with coefficients in a finite dimensional vector space $U^i$, which is closed under the action of $\delta$. Moreover, arguing as in Example \ref{ex:15}, $U^i=\ker \delta^N$ for some positive integer $N$. Whence we obtain a filtration
\begin{equation}
    \ker \delta\subseteq \ker \delta^2\subseteq\cdots \subseteq \ker \delta^{N-1}\subseteq \ker \delta ^N=U^i\,.
\end{equation}
Consider a (finite) basis $\{\gamma_j\}$ of $U^i/\ker \delta^{N-1}$. Then the most general element of $H^i(\mathcal D^\bullet,D)$ modulo polynomials in $x_{\{1\}}^*$ with coefficients in $\ker \delta^{N-1}$ is of the form $\sum_j Q_j \gamma_j$ for some $Q_j\in \mathbb R[x_{\{1\}}^*]$. Hence $\delta(\sum_j Q_j\gamma_j)=\sum_j Q'_j\gamma_j$ modulo polynomials in $x_{\{1\}}^*$ with coefficients in $\ker \delta^{N-1}$, where $Q_j'$ denotes the derivative of $Q_j$ with respect to $x_{\{1\}}^*$. Therefore, $\sum_j Q_j \gamma_j$ is in the kernel of $\delta$ if and only if $Q_j$ is a constant polynomial for all $j$. Similarly, let $\{\beta_k\}$ be a basis of $\ker \delta^{N-1}/\ker \delta^{N-2}$ so that the most general element in $H^i(\mathcal D^\bullet,D)/(\ker \delta^{N-2})[x_{\{1\}}^*]$ can be written as $\sum_j Q_j \gamma_j+\sum_k R_k \beta_k$, $R_k\in \mathbb R[x_{\{1\}}^*]$.  Moreover, there exist real numbers $a_{jk}$ such that $\delta \gamma_j=\sum_k a_{jk}\beta_k$ for all $j$. Hence
\begin{equation}
    \delta\left(\sum_j Q_j \gamma_j + \sum_k R_k \beta_k\right) = \sum_k\left(\sum_j a_{jk}Q_k+R_k'\right)\beta_k   
\end{equation}
modulo polynomials with coefficients in $\ker \delta^{N-2}$. It follows that $\sum_j a_{jk}Q_k+ R_k'=0 $ so that, for each $k$, $R_k$ is a polynomial of degree at most one in the variable $x_{\{1\}}^*$. Continuing the process, we see that in order for a linear combination in $H^i(\mathcal D^\bullet,D)/(\ker^{N-p})$ to lift to an element in the kernel of $\delta$ acting on $H^i(\mathcal D,D)$, the degree of the polynomials in $x_{\{1\}}^*$ appearing as coefficients cannot be greater than $p-1\le N$.

\end{proof}

\begin{mydef}\label{def:26}
Let $G$ be a finite simple hypergraph. For every $i\ge0$, the $i$-th {\it $L_\infty$-Betti number of $G$} is the integer $b_i(G)=\dim H^i(\mathcal L(G))$. The {\it $L_\infty$-Poincar\'e series of $G$} is the formal power series $P_t(G)=\sum_{i=0}^\infty b_i(G)t^i$.
\end{mydef}

\begin{rem}
The terminology of Definition \ref{def:26} is justified by the fact that if $G$ is a graph then, by a theorem of Nomizu \cite{Nomizu54}, $P_t(G)$ coincides with the (topological) Poincar\'e 
 polynomial of the compact nilmanifold associated to $\mathcal L(G)$.
\end{rem}

\begin{rem}
Suppose $E_1(G)=\emptyset$. Then Theorem \ref{thm:16} is trivial since $C^k(\mathcal L(G))$ is finite dimensional for every $k$. More precisely, the generating function for $\dim ( C^k(\mathcal L(G))$ is 
\begin{equation}\label{eq:32}
  (1+t)^{|V(G)|}  \frac{\prod_{p=1}^\infty (1+t^{2p-1})^{|E_{2p}(G)|}}{\prod_{q=1}^\infty(1-t^{2q})^{|E_{2q+1}(G)|}}\,,
\end{equation}
which provides explicit upper bounds for the $L_\infty$-Betti numbers of $G$. In particular, if $G$ is a hypergraph such that $E_{2k+1}(G)=\emptyset$ for all $k\in \mathbb N$ then $P_t(G)$ is a polynomial. 
\end{rem}

\begin{example}
Let $G$ be the hypergraph with $k$ vertices and a single $k$-edge. If $k$ is even, then \eqref{eq:32} specializes to $(1+t)^k(1+t^{k-1})$. Moreover, in this case the differential maps $x_{\{1,2,\ldots,k\}}^*$ to $x_1^*\cdots x_k^*$ and acts trivially on other monomials. Hence
\begin{equation}
 P_t(G)=(1+t)^k(1+t^{k-1})-t^{k-1}-t^k\,.
\end{equation}
On the other hand, if $k$ is odd then \eqref{eq:32} specializes to 
\begin{equation}
\frac{(1+t)^k}{1-t^{k-1}}
\end{equation}
and the only non-zero differentials are the ones mapping 
\begin{equation}
(x_{\{1,2,\ldots,k\}}^*)^j\mapsto  jx_1^*\cdots x_k^*(x_{\{1,2,\ldots,k\}}^*)^{j-1}
\end{equation}
for every $j\in \mathbb N$. Hence
\begin{equation}
P_t(G)=\frac{(1+t)^k-t^{k-1}-t^k}{1-t^{k-1}}\,.
\end{equation}
\end{example}

\begin{rem}
Let $G_1$, $G_2$ be finite simple hypergraphs and consider their disjoint union $G_1+G_2$. Then $\mathcal L(G_1+G_2)=\mathcal L(G_1)\oplus \mathcal L(G_2)$ and, using K\"unneth's Theorem, $P_t(G_1+G_2)=P_t(G_1)P_t(G_2)$. 
\end{rem}

\begin{rem}
In all examples that we have calculated, the $L_\infty$-Poincar\'e series $P_t(G)$ is a rational function of $t$. It would be interesting to know if this is a general feature. 
\end{rem}

\begin{example}
Let $G_1$ be the hypergraph with $|V(G)|=1=|E_1(G)|$. A straightforward calculation shows that $P_t(G_1)=1$. Hence, for any hypergraph $G_2$, $P_t(G_1+G_2)=P_t(G_1)P_t(G_2)=P_t(G_2)$. In particular, we see that the $L_\infty$-Poincar\'e  series does not distinguish the non-isomorphic hypergraphs $G_1+G_2$ and $G_2$ and hence is not a complete invariant of hypergraphs. 
\end{example}

\begin{prop}\label{prop:20}
Let $G$ be a finite simple hypergraph. Then $b_0(G)=1$.
\end{prop}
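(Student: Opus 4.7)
My plan is to compute $H^0(\mathcal L(G))$ directly by describing $C^0(\mathcal L(G))$ and the restriction $d|_{C^0}$ explicitly. The first step is to identify $C^0(\mathcal L(G))$. Using the grading conventions of Definition \ref{def:1}, the only component of $\mathcal L(G)[1]^\vee$ sitting in degree zero is $(\mathcal L(G)^1)^{\vee} = (V/W_1)^{\vee}$, which has basis $\{x_{\{i\}}^* : \{i\} \in E_1(G)\}$. All other graded pieces of $\mathcal L(G)[1]^\vee$ have strictly positive degree, so a degree-zero element of ${\rm Sym}(\mathcal L(G)[1]^\vee)$ must be a polynomial in these even-degree generators. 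Hence $C^0(\mathcal L(G)) = \mathbb R[x_{\{i\}}^* : \{i\} \in E_1(G)]$.

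Next I will compute the differential on this subalgebra. The explicit formula from the remark following Definition \ref{def:6} gives $d(x_{\{i\}}^*) = x_i^*$ for every $\{i\} \in E_1(G)$, and since $d$ is a graded derivation,
\[
d(P) \;=\; \sum_{\{i\} \in E_1(G)} \frac{\partial P}{\partial x_{\{i\}}^*}\, x_i^*
\]
for every $P \in C^0(\mathcal L(G))$.

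To conclude, I will note that the $x_i^*$ for $i \in V(G)$ are part of a basis of degree-one generators of ${\rm Sym}(\mathcal L(G)[1]^\vee)$, linearly independent from the other degree-one generators $x_{\{i,j\}}^*$ and from the polynomial subalgebra in which each coefficient $\partial P/\partial x_{\{i\}}^*$ lives. Consequently $d(P) = 0$ forces $\partial P/\partial x_{\{i\}}^* = 0$ for every $\{i\} \in E_1(G)$, which forces $P$ to be constant. Therefore $H^0(\mathcal L(G)) = \mathbb R$ and $b_0(G) = 1$.

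The only real obstacle is getting the bookkeeping of the grading right, namely verifying that the degree-zero generators of $C^\bullet(\mathcal L(G))$ are in bijection with the $1$-edges of $G$ (and that there are no other contributions, even when $E_k(G)$ is nonempty for odd $k > 1$, since those dualize to degree $k-1 > 0$). Once that identification is in place, the argument is a formal consequence of the Leibniz rule and the linear independence of the dual basis.
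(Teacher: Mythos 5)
Your proof is correct, but it takes a different route from the paper. You compute $H^0(\mathcal L(G))$ directly: you identify $C^0(\mathcal L(G))$ as the polynomial ring $\mathbb R[x_{\{i\}}^*:\{i\}\in E_1(G)]$ (the only degree-zero generators of ${\rm Sym}(\mathcal L(G)[1]^\vee)$, since $x_i^*$ has degree $1$ and $x_I^*$ has degree $|I|-1$), observe that $C^{-1}=0$ so there are no coboundaries, and use the Leibniz rule together with the freeness of $C^1$ as a $C^0$-module on the degree-one generators to conclude that $d(P)=0$ forces $\partial P/\partial x_{\{i\}}^*=0$ for all $\{i\}\in E_1(G)$, hence $P$ constant. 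This is complete and self-contained. The paper instead runs the statement through the long exact sequence of Lemma \ref{lem:13} as set up in Example \ref{ex:15}: it identifies $H^0(\mathcal L(G))$ with the kernel of the connecting map $\delta$ on $H^0(\mathcal D^\bullet,D)$, computes $H^0(\mathcal D^\bullet,D)$ to be $\mathbb R$ or $\mathbb R[x_{\{1\}}^*]$ according to whether $\{1\}\in E_1(G)$, and reads off $\ker\delta=\mathbb R$. The paper's approach buys consistency with the inductive machinery used for Theorem \ref{thm:16} and the later Betti-number computations, where the same exact sequence does real work; your approach is more elementary and avoids that machinery entirely, at the cost of not generalizing to higher cohomological degrees where $C^i$ is no longer so simply described. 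Both arguments are valid; yours is arguably the cleaner proof of this particular statement.
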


\begin{proof}
It follows from \eqref{eq:27} that $H^0(\mathcal L(G))$ is isomorphic to the kernel of the connecting homomorphism $\delta$ acting on $H^0(\mathcal D^\bullet,D)$. Since $\mathcal D^0$ is the space of real polynomials in the variables $x_{\{i\}}^*$ whenever $\{i\}\in E_1(G)$, keeping in mind the action of $D$ as described in Example \ref{ex:15}, we conclude that $H^0(\mathcal D^\bullet,D)$ is isomorphic to $\mathbb R$ if $\{1\}\notin E_1(G)$ and $\delta$ acts as zero. Similarly, $\mathbb R[x_{\{1\}}^*]$ if $\{1\}\in E_1(G)$ and $\delta(P(x_{\{1\}}^*))=P'(x_{\{1\}}^*)$ for all $P\in \mathbb R[x_{\{1\}}^*]$.
\end{proof}

\begin{prop}\label{prop:25}
Let $G$ be a finite simple hypergraph and let $N$ be the smallest integer such that $E_N(G)\neq \emptyset$. Then $b_i(G)=\binom{|V(G)|}{i}$ for all $i<N$.
\end{prop}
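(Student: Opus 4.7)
The plan is to compute $H^i(\mathcal L(G))$ directly from the Maurer-Cartan complex $C^\bullet(\mathcal L(G))$ in the range $i < N$. The crucial input is the degree bookkeeping built into Definition \ref{def:6}: each vertex generator $x_j^*$ sits in cohomological degree $1$, while an edge generator $x_I^*$ for $I\in E_k(G)$ sits in degree $k-1$. Since $E_k(G)=\emptyset$ for all $k<N$, no edge generator has degree less than $N-1$.

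The case $N=1$ is just the assertion $b_0(G)=1$, already established in Proposition \ref{prop:20}, so I would focus on $N\ge 2$. Under this assumption, there are no $1$-edges, hence no generators $x_{\{j\}}^*$, and consequently $d(x_j^*)=0$ for every $j\in V(G)$. By the Leibniz rule, $d$ vanishes identically on the exterior subalgebra $\BigWedge^\bullet V^*$ generated by the vertex variables.

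I would then identify $C^i(\mathcal L(G))$ for $0\le i\le N-1$ by counting degrees of the possible monomials. For $i\le N-2$, no edge variable fits, so $C^i=\BigWedge^i V^*$. In degree $N-1$ one must add in a single $x_I^*$ with $|I|=N$ (a product of two such generators already has degree $2(N-1)>N-1$, and combining with a vertex generator raises the degree further), yielding
\begin{equation*}
C^{N-1}=\BigWedge^{N-1}V^*\oplus \mathrm{span}_{\mathbb R}\{x_I^*\,:\,I\in E_N(G)\}.
\end{equation*}
Moreover, $C^{i-1}$ lies inside $\BigWedge^{i-1}V^*$ for every $i\le N-1$, where $d$ vanishes, so $\mathrm{im}(d)\cap C^i=0$ and $H^i(\mathcal L(G))=\ker d\cap C^i$ throughout this range.

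Finally, to read off the kernel: for $i\le N-2$ this is simply all of $\BigWedge^i V^*$, of dimension $\binom{|V(G)|}{i}$. For $i=N-1$, the only additional observation is that $d$ restricted to $\mathrm{span}_{\mathbb R}\{x_I^*\,:\,I\in E_N(G)\}$ is injective, because $d(x_I^*)=x_{i_1}^*\wedge\cdots\wedge x_{i_N}^*$ for $I=\{i_1<\cdots<i_N\}$ and distinct $N$-subsets of $V(G)$ produce linearly independent elements of $\BigWedge^N V^*$. Hence $\ker d\cap C^{N-1}=\BigWedge^{N-1}V^*$, whose dimension is again $\binom{|V(G)|}{N-1}$. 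There is no real obstacle here; the argument is essentially degree bookkeeping, and the only mild care needed is in handling the two summands of $C^{N-1}$ separately.
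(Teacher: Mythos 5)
Your argument is correct and follows essentially the same route as the paper's proof: both reduce the claim to degree bookkeeping in the Maurer--Cartan complex (vertex duals $x_j^*$ in degree $1$, $k$-edge duals in degree $k-1$) together with the vanishing of $d$ on the exterior algebra generated by the $x_j^*$. If anything you are more careful than the paper's one-line argument, which asserts that $C^i(\mathcal L(G))$ consists only of degree-$i$ polynomials in the $x_j^*$ for all $i<N$; as you correctly note, for $i=N-1$ there is an additional summand spanned by the $x_I^*$ with $I\in E_N(G)$, and your observation that $d$ is injective on that summand (while no coboundaries arrive from degree $N-2$) is exactly what is needed to make the $i=N-1$ case airtight.
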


\begin{proof}
The case $N=1$ is the statement of Proposition \ref{prop:20}.  If $1\le i<N$, then the result follows from the fact that $C^i(\mathcal L(G))$ consists of degree $i$ polynomials in the odd variables $x_j^*$ on which $d$ acts trivially. 
\end{proof}

\begin{example}
If $N=2$, then $b_1(G)=|V(G)|$ (as proved in \cite{PouseeleTirao09} for graphs).
\end{example} 

\begin{prop}
Let $G$ be a finite simple hypergraph. Then $b_1(G)-|V(G)|+|E_1(G)|$ is the number of 2-edges $\{i,j\}$ such that $\{k\}\in E_1(G)$ for at least one $k\in \{i,j\}$.
\end{prop}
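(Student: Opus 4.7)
The plan is to compute $b_1(G)=\dim H^1(\mathcal L(G))$ directly as the dimension of $\ker(d\colon C^1\to C^2)$ modulo $\operatorname{im}(d\colon C^0\to C^1)$. Set $A=\{i\in V(G):\{i\}\in E_1(G)\}$, $B=V(G)\setminus A$, and let $S$ be the set of 2-edges of $G$ with at least one endpoint in $A$. Since $|A|=|E_1(G)|$ and $|B|=|V(G)|-|E_1(G)|$, the identity to be proved is equivalent to $b_1(G)=|B|+|S|$.

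The only even generators of $C^\bullet(\mathcal L(G))$ are the variables $y_k:=x_{\{k\}}^*$ with $k\in A$, so writing $R:=\mathbb R[y_k:k\in A]$ gives $C^0=R$ and $C^1=\bigoplus_{i\in V(G)} R\cdot x_i^*\oplus\bigoplus_{e\in E_2(G)} R\cdot x_e^*$. Expanding $d\alpha$ for $\alpha=\sum_i P_ix_i^*+\sum_e R_e x_e^*\in C^1$ in the standard monomial basis of $C^2$ and setting each coefficient to zero: the coefficient of $x_i^*x_e^*$ immediately forces $R_e$ to be a constant $r_e\in\mathbb R$; and the coefficient of $x_p^*x_q^*$ (for $p<q$ in $V(G)$) relates partials of $P_p,P_q$ with $r_{\{p,q\}}$. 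Splitting into cases according to whether $p,q$ lie in $A$ or $B$ yields (i) $r_e=0$ for every 2-edge $e\subset B$, so the free $r$'s are indexed precisely by $S$; (ii) each $P_i$ for $i\in B$ is an affine polynomial in $R$ with linear part determined by the $r$'s and a single free constant $C_i$; and (iii) the ``1-form'' $\eta:=\sum_{j\in A}P_jx_j^*$ satisfies a curl-type equation $d_A\eta=-\omega$ in the Koszul sub-complex $K^\bullet\subset C^\bullet(\mathcal L(G))$ generated by $\{y_k,x_k^*\}_{k\in A}$, with $\omega:=\sum_{\{i,j\}\in E_2(G),\,\{i,j\}\subset A} r_{\{i,j\}}\,x_i^*x_j^*$.

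The $(A,A)$ case is where the only non-bookkeeping step enters. The sub-complex $K^\bullet$ is the Koszul/polynomial de Rham complex on $\mathbb R^A$ and is therefore acyclic in positive degrees (equivalently, this follows by combining the K\"unneth-type formula for disjoint unions with the easy calculation $P_t=1$ for a single vertex carrying one $1$-edge). Since $\omega$ has constant coefficients it is $d_A$-closed, hence $d_A$-exact: $\omega=d_A\theta$ for an explicit constant-coefficient 1-form $\theta$. Then $\eta+\theta$ is $d_A$-closed, and acyclicity gives $\eta+\theta=d_A F$ for some $F\in R$ unique modulo a constant, so $P_j=\partial F/\partial y_j-\theta_j$ for $j\in A$. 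Finally, $\operatorname{im}(d|_{C^0})$ consists precisely of those $\alpha$ with $r=0$, $C=0$, and $F$ ranging freely in $R$; modding out kills the $F$-freedom entirely, leaving $(r_e)_{e\in S}$ and $(C_i)_{i\in B}$ as a basis of $H^1(\mathcal L(G))$. Therefore $b_1(G)=|S|+|B|=|V(G)|-|E_1(G)|+|S|$, which is the claimed identity. The main technical input is the acyclicity of $K^\bullet$; the rest is careful bookkeeping in the monomial basis.
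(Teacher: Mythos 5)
Your proof is correct, and it takes a genuinely different route from the paper's. The paper argues by induction on the number of $1$-edges: it removes a vertex $1$ with $\{1\}\in E_1(G)$, invokes part 2) of Lemma \ref{lem:13} to identify $H^1(\mathcal L(G))$ with $\ker\delta$ acting on $H^1(\mathcal D^\bullet,D)$, and shows that each such removal decreases $b_1$ by exactly the number of $2$-edges through the removed vertex. You instead compute $\ker(d\colon C^1\to C^2)/\operatorname{im}(d\colon C^0\to C^1)$ globally over the polynomial ring $R=\mathbb R[y_k:k\in A]$, and your bookkeeping of the coefficient equations is accurate: $R_e$ constant, $r_e=0$ for $e\subseteq B$, $P_i$ affine with prescribed linear part for $i\in B$, and the curl equation $d_A\eta=-\omega$ on the $A$-block. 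The one real input, acyclicity in positive degrees of the Koszul subcomplex $\mathbb R[y_k]\otimes\Lambda[x_k^*]$, is standard and, as you note, also follows from the paper's K\"unneth remark together with the computation $P_t=1$ for a single vertex carrying a $1$-edge; it correctly kills the $F$-freedom against $\operatorname{im}(d|_{C^0})$ and leaves $(r_e)_{e\in S}$ and $(C_i)_{i\in B}$ as a basis. One cosmetic slip: the primitive $\theta$ of $\omega$ is not a constant-coefficient form (a constant-coefficient $1$-form is $d_A$-closed); what you mean is the explicit primitive with coefficients linear in the $y$'s, e.g.\ $\theta=\sum_{i<j}r_{\{i,j\}}y_i x_j^*$, and this does not affect the argument. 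As for what each approach buys: yours is self-contained (no dependence on Lemma \ref{lem:13}), treats all $1$-edges at once, and produces explicit cocycle representatives, namely $x_i^*$ for $i\in B$ and $x_e^*-\sum_{q\in B\cap \partial e} y_{p}x_q^* - \theta_e$-type corrections for $e\in S$; the paper's induction is shorter once the long-exact-sequence machinery of Lemma \ref{lem:13} and Example \ref{ex:15} is in place, and it is the same engine the paper reuses for $b_0$, for $b_i$ with $i<N$, and for the finiteness theorem, so it keeps the section uniform.
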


\begin{proof}
If $E_1(G)=\emptyset$ then the statement reduces to $b_1(G)=|V(G)|$, which is a particular case of Proposition \ref{prop:25}. Hence we can work by induction on the number of $1$-edges. Assume $\{1\}\in E_1(G)$ and let $G\setminus \{1\}$ be the hypergraph obtained by removing $1\in V(G)$ and all the edges $e\in E(G)$ such that $1\in e$. By induction, it suffices to show that if $\{1\}\in E_1(G)$, then $b_1(G)-b_1(G\setminus\{1\})$ is the number of $e\in E_2(G)$ such that $1\in e$. This can be shown using Lemma \ref{lem:13}, as follows. Since $\{1\}\in E_1(G)$, then $H^1(\mathcal L(G))$ is isomorphic to the kernel of $\delta$ acting on $H^1(\mathcal D^\bullet,D)$. The most general element of $H^1(\mathcal D^\bullet,D)$ is of the form
\begin{equation}
\xi=\sum_\gamma P_\gamma(x_{\{1\}}^*)\gamma + \sum_j Q_j(x_{\{1\}}^*)x_{\{1,j\}}^*\,,
\end{equation}
where $\gamma$ ranges over a  basis of $H^1(\mathcal L(G\setminus \{1\}))$, $j$ ranges over all the vertices connected to $1$ by a 2-edge, and $P_i,Q_j\in \mathbb R[x_{\{1\}}^*]$. Imposing $\delta(\xi)=0$ forces the $Q_j$ to be constants. If $\{j\}\in E_1(G)$, then $\delta(x_{\{1,j\}}^*)=x_j^*=d(x_{\{j\}}^*)$ vanishes in $H^1(\mathcal D^\bullet,D)$. If $\{j\}\notin E_1(G)$, then $\delta(\xi)=0$ forces $Q_j=P'_{x_j^*}$. On the other hand, if $\gamma\in H^1(\mathcal L(G\setminus\{1\}))$ is not of the form $x_j^*$ with $\{1,j\}\in E_2(G)$ then $P_\gamma'=0$. 
Hence the kernel of the action of $\delta$ on $H^1(\mathcal D^\bullet,D)$ is isomorphic to the direct sum of $H^1(\mathcal L(G\setminus\{1\}))$ with the span of all the 2-edges that contain $1$ as a vertex. This proves that $b_1(G)-b_1(G\setminus\{1\})$ is the number of $e\in E_2(G)$ such that $1\in e$ and thus the result.
\end{proof}

\begin{prop}
Let $G$ be a finite simple hypergraph and let $N$ be the smallest integer such that $E_N(G)\neq \emptyset$. If $N>1$, then
\begin{equation}\label{eq:38}
    b_N(G)=\binom{|V(G)|}{N}-\binom{|V(G)|}{N+1}+(N-1)|E_N(G)| +\sum_{S\subseteq \binom{V(G)}{N+1}}\min(\nu(S),1)\,,
\end{equation}
where $\nu(S)$ is the number $N$-edges with vertices all contained in $S$. 
\end{prop}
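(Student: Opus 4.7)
The plan is to compute $b_N(G)=\dim Z^N-\dim B^N$ directly from the cochain complex $C^\bullet(\mathcal L(G))$, exploiting the strong restrictions imposed by the hypothesis that $N$ is the smallest edge size: every edge variable $x_I^*$ has degree $|I|-1\geq N-1$, so $C^{N-1}$ and $C^N$ contain only a few types of monomials. In particular, the only generators of $C^{N-1}$ with nonzero differential are the $|E_N|$ single $N$-edge variables $x_I^*$, each sent by $d$ to the $N$-fold vertex product $x_{i_1}^*\cdots x_{i_N}^*$ indexed by the vertices of $I$; since $G$ is simple these images are pairwise distinct and linearly independent, so $\dim B^N=|E_N|$.

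Next I would decompose $C^N$ by monomial type and analyze $d$ on each block: (a) the $\binom{|V(G)|}{N}$ pure $N$-vertex monomials (all closed); (b) the $|V(G)|\cdot|E_N(G)|$ mixed products $x_v^* x_I^*$ with $v\in V(G)$ and $I\in E_N(G)$; (c) the $|E_{N+1}(G)|$ single $(N+1)$-edge variables $x_J^*$; and, only when $N=2$, the $\binom{|E_2(G)|}{2}$ pairwise products of $2$-edge variables. The decisive computation is on block (b): $d(x_v^* x_I^*)$ vanishes if and only if $v\in I$, producing $N|E_N|$ automatic cocycles, and otherwise equals $\pm x_{I\cup\{v\}}^*$, the $(N+1)$-fold vertex product on $I\cup\{v\}$. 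Grouping these nonzero images by the target $(N+1)$-subset $S=I\cup\{v\}$, the pairs $(v,I)$ landing on $x_S^*$ are in bijection with the $N$-edges contained in $S$, namely $\nu(S)$ of them, all collapsing onto the one-dimensional span of $x_S^*$. This is precisely the mechanism that produces the $\sum_S\min(\nu(S),1)$ summand of the formula. Block (c) lands in the same pure-vertex sector of $C^{N+1}$ (potentially overlapping with the image of block (b) when an $(N+1)$-edge already contains an $N$-edge), while block (d) maps into a disjoint mixed sector and can be handled separately.

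Assembling the kernel dimensions across the four blocks and subtracting $\dim B^N=|E_N|$ yields the formula after combinatorial regrouping. The main obstacle is the careful bookkeeping: one must track, for each $(N+1)$-subset $S$, how the images of blocks (b) and (c) combine inside the pure $(N+1)$-vertex sector of $C^{N+1}$, how many pure $(N+1)$-vertex monomials in $C^{N+1}$ are not in the image of any block (this is where the $-\binom{|V(G)|}{N+1}$ summand arises), and how the linear dependencies between the $\nu(S)$ pairs hitting each $x_S^*$ contribute extra cocycles. Using the identity $\sum_S\nu(S)=(|V(G)|-N)|E_N(G)|$ to convert between edge-based and subset-based counts, one can repackage the direct block-wise count into the compact form stated.
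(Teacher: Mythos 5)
Your overall strategy coincides with the paper's: compute $b_N=\dim Z^N-\dim B^N$ at the cochain level, identify $B^N$ with the span of the $|E_N(G)|$ products $x_{i_1}^*\cdots x_{i_N}^*$ coming from $N$-edges, stratify $C^N$ by monomial type, and group the mixed monomials $x_v^*x_I^*$ according to the $(N{+}1)$-set $S=I\cup\{v\}$ they map to, which is where $\nu(S)$ enters. You are in fact more careful than the paper, whose proof lists only your blocks (a) and (b). Your block (d) indeed contributes nothing (for $N=2$ the images $d(x_I^*x_J^*)$ land in the sector $\Lambda^2V^\vee\cdot\langle x_K^*\rangle$ and $d$ is injective there), but your block (c) genuinely matters whenever an $(N{+}1)$-edge $J$ contains an $N$-edge $I$: then $x_J^*\mp x_v^*x_I^*$, with $\{v\}=J\setminus I$, is a cocycle that is not a coboundary, contributing $\#\{J\in E_{N+1}(G):\nu(J)\ge 1\}$ extra classes that appear neither in \eqref{eq:38} nor in the paper's argument. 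Flagging this is a genuine improvement, though you do not resolve it.

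The genuine gap is that the ``combinatorial regrouping'' you defer is exactly where the content lies, and the one concrete mechanism you propose for it is a miscount. The pure $(N{+}1)$-vertex monomials of $C^{N+1}$ \emph{not} hit by $d$ contribute to $H^{N+1}$, not to $H^{N}$; what enters $b_N$ is the \emph{rank} of $d$ on block (b), namely $\#\{S:\nu(S)\ge 1\}=\sum_S\min(\nu(S),1)$, subtracted from the dimension of the block. Carrying out your plan honestly (setting block (c) aside) gives
\begin{equation*}
b_N(G)=\binom{|V(G)|}{N}+\bigl(|V(G)|-1\bigr)|E_N(G)|-\sum_{S}\min(\nu(S),1)
=\binom{|V(G)|}{N}+(N-1)|E_N(G)|+\sum_{S}\bigl(\nu(S)-\min(\nu(S),1)\bigr),
\end{equation*}
using $\sum_S\nu(S)=(|V(G)|-N)|E_N(G)|$. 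This agrees with Example \ref{ex:29} (it equals $k\binom{n+1}{k+1}$ for the complete $k$-uniform hypergraph), but it is not the displayed right-hand side of \eqref{eq:38}: for the path with $V(G)=\{1,2,3\}$ and $E_2(G)=\{\{1,2\},\{2,3\}\}$ a direct computation in the five-dimensional Lie algebra gives $b_2=6$, matching the count above, whereas \eqref{eq:38} yields $3-1+2+1=5$. So no amount of regrouping will land your (correct) block-by-block count on the stated identity; you must either correct the target formula or exhibit the cancellation explicitly, rather than leaving the assembly as a black box.
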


\begin{proof}
$C^N(\mathcal L(G))$ is generated by two types of monomials: type I) $x_{i_1}^*\cdots x_{i_N}^*$, and type II) $x_{i_0}^*x_{\{i_1,\ldots,i_N\}}^*$. There are $\binom{|V(G)|}{N}$ monomials of type I) and they are all cocycles. Of these $|E_N(G)|$ are coboundaries for a total contribution of $\binom{|V(G)|}{N}-|E_N(G)|$ linearly independent cocycles from monomials of type I). Since $N$ is the smallest possible edge size, no linear combination of monomials of type II) can be a coboundary. In order to count type II) cocycles, it is helpful to consider two subcases. If $i_0\in \{i_1,\ldots,i_N\}$, then $x_{i_0}^*x_{\{i_1,\ldots,i_N\}}^*$ is a cocycle and there are $N|E_N(G)|$ of those. If $i_0\notin \{i_1,\ldots,i_N\}$, we may assume without loss of generality that $i_0<i_1<\cdots<i_N$. Then $x_{i_0}^*x_{\{i_1,\ldots,i_N\}}^*$ is not a cocycle. However, consider linear combinations of the form
\begin{equation}
\xi= \sum \alpha_j x_{i_j}^*x_{\{i_0,i_i,\ldots,\widehat{i_j},i_N\}}^*\,,
\end{equation}
where the sum ranges over all $j\in \{0,\ldots,N\}$ such that $\{i_0,i_i,\ldots,\hat{i_j},i_N\}$ is in $E_N(G)$. Then $\xi$ is a cocycle if and only if $\sum (-1)^j\alpha_j=0$. Due to this linear condition, the number of linearly independent cocycles that the subset $\{i_0,i_1,\ldots,i_N\}\in \binom{V(G)}{N+1}$ contributes to the $N$-th cohomology group is $\nu(S)-1$. Adding all of these contributions together easily yields \eqref{eq:38}.
\end{proof}

\begin{example}\label{ex:29}
Let $G$ be the complete $k$-uniform hypergraph on $n$ vertices. Then 
\begin{equation}
b_k(G)=\binom{n}{k}-\binom{n}{k+1}+(k-1)\binom{n}{k}+(k+1)\binom{n}{k+1}=k\binom{n+1}{k+1}\,.
\end{equation}
\end{example}

\begin{example}
If $N=2$, then
\begin{equation}
\binom{|V(G)|}{3}-\sum_{S\subseteq \binom{V(G)} {3}}\min(\nu(S),1) 
\end{equation}
is equal to the difference between number of triangles and the number of pairs of adjacent 2-edges. Hence, specializing \eqref{eq:38} to the case of graphs, we recover the formula for $b_2(G)$ proved in \cite{PouseeleTirao09}.
\end{example}

\section*{Acknowledgements} We would like to thank the anonymous referee for valuable feedback. This work is supported in part by VCU Quest Award ``Quantum Fields and Knots: An Integrative Approach.''

\begin{bibdiv} 
\begin{biblist}

\bib{AdBGS21}{article}{
   author={Aldi, Marco},
   author={de Beaudrap, Niel},
   author={Gharibian, Sevag},
   author={Saeedi, Seyran},
   title={On efficiently solvable cases of quantum $k$-SAT},
   journal={Comm. Math. Phys.},
   volume={381},
   date={2021},
   number={1},
   pages={209--256},
}

\bib{DaniMainkar05}{article}{
   author={Dani, S. G.},
   author={Mainkar, Meera G.},
   title={Anosov automorphisms on compact nilmanifolds associated with
   graphs},
   journal={Trans. Amer. Math. Soc.},
   volume={357},
   date={2005},
   number={6},
   pages={2235--2251},
}

\bib{Dixmier55}{article}{
   author={Dixmier, J.},
   title={Cohomologie des alg\`ebres de Lie nilpotentes},
   language={French},
   journal={Acta Sci. Math. (Szeged)},
   volume={16},
   date={1955},
   pages={246--250}, 
}

\bib{DottiTirao00}{article}{
   author={Dotti, Isabel},
   author={Tirao, Paulo},
   title={Symplectic structures on Heisenberg-type nilmanifolds},
   journal={Manuscripta Math.},
   volume={102},
   date={2000},
   number={3},
   pages={383--401},
}

\bib{Getzler09}{article}{
   author={Getzler, Ezra},
   title={Lie theory for nilpotent $L_\infty$-algebras},
   journal={Ann. of Math. (2)},
   volume={170},
   date={2009},
   number={1},
   pages={271--301},
}

\bib{Huebschmann10}{article}{
   author={Huebschmann, Johannes},
   title={On the construction of $A_\infty$-structures},
   journal={Georgian Math. J.},
   volume={17},
   date={2010},
   number={1},
   pages={161--202},
}

\bib{Huebschmann17}{article}{
   author={Huebschmann, J.},
   title={Multi derivation Maurer-Cartan algebras and sh Lie-Rinehart
   algebras},
   journal={J. Algebra},
   volume={472},
   date={2017},
   pages={437--479},
   issn={0021-8693},
}

\bib{Jukna11}{book}{
   author={Jukna, Stasys},
   title={Extremal combinatorics},
   series={Texts in Theoretical Computer Science. An EATCS Series},
   edition={2},
   note={With applications in computer science},
   publisher={Springer, Heidelberg},
   date={2011},
}   

\bib{LMSS10}{article}{
   author={Laumann, C. R.},
   author={Moessner, R.},
   author={Scardicchio, A.},
   author={Sondhi, S. L.},
   title={Random quantum satisfiability},
   journal={Quantum Inf. Comput.},
   volume={10},
   date={2010},
   number={1-2},
}

\bib{LauretWill11}{article}{
   author={Lauret, Jorge},
   author={Will, Cynthia},
   title={Einstein solvmanifolds: existence and non-existence questions},
   journal={Math. Ann.},
   volume={350},
   date={2011},
   number={1},
   pages={199--225},
}

\bib{Mainkar15}{article}{
   author={Mainkar, Meera G.},
   title={Graphs and two-step nilpotent Lie algebras},
   journal={Groups Geom. Dyn.},
   volume={9},
   date={2015},
   number={1},
   pages={55--65},
}

\bib{Nomizu54}{article}{
   author={Nomizu, Katsumi},
   title={On the cohomology of compact homogeneous spaces of nilpotent Lie
   groups},
   journal={Ann. of Math. (2)},
   volume={59},
   date={1954},
   pages={531--538}
}

\bib{Ovando20}{article}{
   author={Ovando, Gabriela P.},
   title={The geodesic flow on nilmanifolds associated to graphs},
   journal={Rev. Un. Mat. Argentina},
   volume={61},
   date={2020},
   number={2},
   pages={315--338},
}

\bib{PouseeleTirao09}{article}{
   author={Pouseele, Hannes},
   author={Tirao, Paulo},
   title={Compact symplectic nilmanifolds associated with graphs},
   journal={J. Pure Appl. Algebra},
   volume={213},
   date={2009},
   number={9},
   pages={1788--1794},
}

\bib{LAG09}{book}{
   author={Springer, T. A.},
   title={Linear algebraic groups},
   series={Modern Birkh\"{a}user Classics},
   edition={2},
   publisher={Birkh\"{a}user Boston, Inc., Boston, MA},
   date={2009},
   pages={108},
}

\end{biblist}
\end{bibdiv}

\vskip.1in\noindent
\address{Marco Aldi\\
Department of Mathematics and Applied Mathematics\\
Virginia Commonwealth University\\
Richmond, VA 23284, USA\\
\email{maldi2@vcu.edu}}

\vskip.1in\noindent
\address{Samuel Bevins\\
Department of Mathematics and Applied Mathematics\\
Virginia Commonwealth University\\
Richmond, VA 23284, USA\\
\email{bevinssj@vcu.edu}}

\end{document}